\documentclass[a4paper,10pt]{article}

\usepackage{amsmath,amsfonts,amssymb,amsthm}
\usepackage{graphicx}
\usepackage{enumerate}
\usepackage{xcolor}
\usepackage{url}
\usepackage{tcolorbox}

\usepackage{hyperref}

\parindent0ex
\parskip1ex

 % like \ker
\DeclareMathOperator{\im}{im} % like \ker

\DeclareMathOperator{\e}{e} % like \ln
\DeclareMathOperator{\id}{Id}
\DeclareMathOperator{\sgnvar}{sgnvar}

\newcommand{\R}{{\mathbb R}}
\newcommand{\N}{{\mathbb N}}
\newcommand{\Z}{{\mathbb Z}}

\newcommand{\dd}[2]{\frac{\text{d} #1}{\text{d} #2}}
\newcommand{\trans}{\mathsf{T}}
\newcommand{\ones}{1}

\DeclareMathOperator{\relint}{relint}

\newcommand{\dep}{d}
\newcommand{\rka}{l}
\newcommand{\mm}{m}

\newcommand{\mA}{A}
\newcommand{\mB}{B}
\newcommand{\cc}{c}
\newcommand{\IL}{I}
\newcommand{\JL}{J}
\newcommand{\LL}{L}
\newcommand{\setP}{P}

\newcommand{\mBp}{{\mathcal B}}
\newcommand{\mG}{G}
\newcommand{\mH}{H}

\newcommand{\yc}{\bar y}
\newcommand{\yp}{y'}

\newtheorem{thm}{Theorem}
\newtheorem{pro}[thm]{Proposition}
\newtheorem{lem}[thm]{Lemma}
\newtheorem{cor}[thm]{Corollary}
\theoremstyle{definition}

\newtheorem{exa}{Example}
\newtheorem{cla}[exa]{Class}

%\newcommand{\blue}[1]{{\color{blue}#1}}
%\newcommand{\green}[1]{{\color{green}#1}}

%\makeatletter
%\def\blfootnote{\xdef\@thefnmark{}\@footnotetext}
%\makeatother

\newcommand\blfootnote[1]{%
  \begin{NoHyper}
  \renewcommand\thefootnote{}\footnote{#1}%
  \addtocounter{footnote}{-1}%
  \end{NoHyper}
}

%%%%%%%%%% %%%%%%%%%% %%%%%%%%%% %%%%%%%%%% %%%%%%%%%%

\begin{document}

\title{
Parametrized systems of generalized polynomial equations: first applications to fewnomials}

\author{% 
Stefan M\"uller$$,
Georg Regensburger
}

\date{\today}

\maketitle

\begin{abstract}
%in $n$ positive variables and involving $m$ monomials with positive parameters;
%that is, $x \in \mathbb{R}^n_>$ such that $A \, (c \circ x^B) = 0$ with 
%coefficient matrix $A \in \mathbb{R}^{l \times m}$,
%exponent matrix $B \in \mathbb{R}^{n \times m}$,
%parameter vector $c \in \mathbb{R}^m_>$,
%and componentwise product $\circ$.
We consider positive solutions to parametrized systems of generalized polynomial equations (with real exponents and positive parameters).
By a fundamental result obtained in parallel work,
polynomial systems are determined by geometric objects, rather than matrices:
a polytope $P$ (arising from the coefficient matrix)
and two subspaces representing monomial differences and dependencies (arising from the exponent matrix).
The dimension of the latter subspace, the monomial dependency $d$, is crucial. 
Indeed, we rewrite {\em polynomial} equations 
in terms of $d$ {\em binomial} equations on the coefficient polytope $P$,
involving $d$ monomials in the parameters.
We further study the solution set on $P$ using methods from analysis
such as sign-characteristic functions 
and Wronskians.

In this work, we present first applications to fewnomial systems
through five (classes of) examples.
In particular, we study
(i) $n$ trinomials involving ${n+2}$ monomials in $n$ variables, having dependency $d=1$,
and (ii) one trinomial and one $t$-nomial (with $t\ge3$) in two variables, having $d=t-1\ge2$.
For (i), we bound the number of positive solutions
using the number of roots of a univariate polynomial of degree at most $n$. 
We also show that this number is always less than or equal to the number of sign changes in an optimal Descartes' rule
given in Bihan et al.~(2021).
For (ii), we improve upper bounds given in Li et al.~(2003) and Koiran et al.~(2015).
Further, for two trinomials ($t=3$),
we refine the known upper bound of five in terms of the exponents,
and we find an example with five positive solutions
that is even simpler than the smallest ``Haas system''.

\vspace{2ex}
{\bf Keywords.} 
generalized polynomial equations, fewnomials,
number of solutions/components, upper bounds,
sign-characteristic functions, Wronskians,
Descartes' rule of signs
%fewnomials, 
%parametrizations,
%sign-characteristic functions, 
%multivariate Descartes' rule of signs,
%Wronskians
%\vspace{2ex}
%{\bf AMS subject classification.} 
%14P15
%52B99
%12D10
\end{abstract}

\vspace{-2ex}
\blfootnote{
\scriptsize

\noindent
{\bf Stefan~M\"uller} \\
Faculty of Mathematics, University of Vienna, Oskar-Morgenstern-Platz 1, 1090 Wien, Austria \\[1ex]
{\bf Georg Regensburger} \\
Institut f\"ur Mathematik, Universit\"at Kassel, Heinrich-Plett-Strasse 40, 34132 Kassel, Germany \\[1ex]
Corresponding author: 
\href{mailto:st.@univie.ac.at}{st.mueller@univie.ac.at}
}

%%%%%%%%%% %%%%%%%%%% %%%%%%%%%% %%%%%%%%%% %%%%%%%%%%

\section{Introduction}

In this work,
we apply our fundamental results on positive solutions to pa\-ram\-e\-trized systems of generalized polynomial {\em inequalities} 
(with real exponents and positive parameters), obtained in parallel work~\cite{MuellerRegensburger2023a},
to generalized polynomial {\em equations}, specifically to fewnomial systems.

Let 
$\mA \in \R^{\rka \times \mm}$ be a coefficient matrix,
$\mB \in \R^{n \times \mm}$ be an exponent matrix, and
$\cc \in \R^\mm_>$ be a positive parameter vector.
They define the
parametrized system of generalized polynomial equations
\begin{equation*}
\sum_{j=1}^{\mm} a_{ij} \, \cc_j \, x_1^{b_{1j}} \cdots x_n^{b_{nj}} = 0 , \quad i=1,\ldots,\rka ,
\end{equation*}
in $n$ positive variables $x_i>0$, $i=1,\ldots,n$,
and involving $m$ monomials $x_1^{b_{1j}} \cdots x_n^{b_{nj}}$, $j=1,\ldots,\mm$.
In compact form, 
\begin{equation} \label{eq:problem}
\mA \left( \cc \circ x^\mB \right) = 0
\end{equation}
for $x \in \R^n_>$.

We obtain~\eqref{eq:problem} as follows.
From the exponent matrix $\mB = (b^1,\ldots,b^\mm)$,
we define the monomials $x^{b^j} = x_1^{b_{1j}} \cdots x_n^{b_{nj}} \in \R_>$,
the vector of monomials $x^\mB \in \R^\mm_>$ via $(x^\mB)_j = x^{b^j}$,
and the vector of monomial terms $\cc \circ x^\mB \in \R^\mm_>$ using the componentwise product~$\circ$.
(All notation is formally introduced at the end of this introduction.)

System~\eqref{eq:problem} allows for applications in two areas:
(i) fewnomial systems, see e.g.\ \cite{Khovanskii1991,Sottile2011},
and (ii) reaction networks with (generalized) mass-action kinetics,
see e.g.\ \cite{HornJackson1972,Horn1972,Feinberg1972} and \cite{MuellerRegensburger2012,MuellerRegensburger2014,Mueller2016,MuellerHofbauerRegensburger2019}.
%In (i), in order to obtain multivariate extensions of Descartes'/Laguerre's rule of signs~\cite{Mueller2016,Bihan2021,Feliu2022},
%we suggest that $\mA \in \{-1,0,1\}^{\rka \times \mm}$ encodes sign patterns and $\cc \in \R^\mm_>$ encodes positive coefficients.
In (ii), every reaction rate is given by a positive parameter (a rate constant) times a monomial.
We depict the situation in Figure~\ref{fig:1}.

\begin{figure}[h]
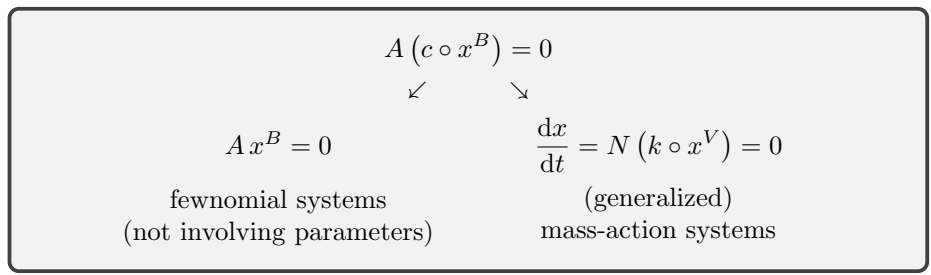

\begin{tcolorbox}
\vspace{-3ex}
\begin{gather*}
\mA \left(\cc \circ x^\mB\right) = 0 \\ 
%\text{parametrized systems of generalized polynomial equations} \\
\rotatebox[origin=c]{-45}{$\downarrow$} \hspace{1cm} \rotatebox[origin=c]{45}{$\downarrow$} \\
\parbox[t]{5cm}{\centering $\mA \, x^\mB = 0$ \\[2ex] fewnomial systems \\ (not involving parameters)}
\parbox[t]{5cm}{\centering $\displaystyle \dd{x}{t} = N \left(k \circ x^V\right) = 0$ \\[1ex] (generalized) \\ mass-action systems}
\end{gather*}
\end{tcolorbox}
\label{fig:1}
\caption{(Parametrized) systems of generalized polynomial equations for positive variables} % $x>0$}
\end{figure}

Motivated by reaction networks,
we extend the standard setting of fewnomial systems in two ways.
(i) We assign a positive {\em parameter}~$\cc_j$ to every monomial~$x^{b^j}$, and
(ii) we consider partitions of the monomials into {\em classes},
corresponding to a decomposition of $\ker \mA$ as a direct product.

In order to study system~\eqref{eq:problem},
we instantiate our results for parametrized systems of generalized polynomial {\em inequalities},
presented in parallel work~\cite{MuellerRegensburger2023a}.
\begin{enumerate}
\item
We identify the invariant geometric objects of system~\eqref{eq:problem},
namely, the {\em coefficient ``polytope''}~$\setP$, 
the monomial difference subspace~$\LL$,
and the monomial dependency subspace~$D$.

More specifically,
since $\cc \circ x^\mB \in \R^\mm_>$,
we write system~\eqref{eq:problem} as $(\cc \circ x^\mB) \in C$,
where $C = \ker \mA \cap \R^\mm_>$ is the coefficient cone.
We obtain the coefficient polytope $\setP = C \cap \Delta$ 
by intersecting $C$ with a direct product $\Delta$ of simplices (or appropriate affine subspaces) on the classes.

The monomial difference subspace $L$ is determined by the affine monomial spans of the classes,
whereas the monomial dependency subspace $D$ captures affine dependencies within {\em and} between classes.
In particular, the {\em monomial dependency}~$\dep = \dim D$ is crucial.
\item
Most importantly,
we rewrite system~\eqref{eq:problem} in terms of $\dep$ binomial equations on the coefficient polytope $\setP$,
involving $\dep$ monomials in the parameters.

%In particular, we establish an {\em explicit} bijection between solutions on $P$ and solutions of the original system~\eqref{eq:problem} via exponentiation.
In particular, we establish an {\em explicit} bijection between
the solution set \[ Z_\cc = \{ x \in \R^n_> \mid \mA \left( \cc \circ x^\mB \right) = 0 \} \] of system~\eqref{eq:problem} 
and the solution set on $P$,
 \[ Y_\cc = \{ y \in P \mid y^z = \cc^z \text{ for all } z \in D \} , \]
via exponentiation.
\item
We obtain a problem classification.
If $\dep=0$ (the ``very few''-nomial case), solutions exist (for all positive  $\cc$) and can be parametrized explicitly.

If $\dep>0$, we further study the solution set on $\setP$ 
using methods from analysis
such as sign-characteristic functions, Descartes' rule of signs for functions, and Wronskians.
\end{enumerate}

Our results in~\cite{MuellerRegensburger2023a} lay the groundwork for a novel approach to ``positive algebraic geometry''.
They are based on methods from linear algebra and convex/polyhedral geometry
(and complemented by techniques from analysis).

A fundamental problem in fewnomial theory
is to bound the number of the (finitely many) solutions
to $n$ equations for $n$ unknowns.
In 1980, Khovanskii gave an upper bound for the number of non-degenerate positive
solutions to polynomial equations in $n$ variables involving $\mm$ monomials in terms of~$n$ and~$\mm$~\cite{Khovanskii1991,Sottile2011}.
The general bound has been further improved by Bihan and Sottile,
by rewriting fewnomial systems as so-called Gale dual systems~\cite{BihanSottile2007}.
For a more geometric formulation of the latter, see also~\cite{BihanDickensteinMagali2020}.

Gale dual systems, as used e.g.\ in \cite{BihanSottile2007,Bihan2017,BihanDickensteinMagali2020,Bihan2021}, 
do not involve the invariant geometric objects (the coefficient cone and polytope) explicitly.
In particular, one chooses a basis of $\ker \mA$
to parametrize the coefficient cone $C = \ker \mA \cap \R^\mm_>$.
In our approach, we abstractly rewrite the problem in terms of the coefficient polytope~$\setP$,
and for explicit computations, we often parametrize it via generators. % (not via a basis of the underlying subspace).

%Most importantly, Gale dual systems do not involve the invariant geometric objects (the coefficient cone and polytope) explicitly,
%but their parametrizations.
%In particular, one chooses a basis of $\ker \mA$ and coefficients from a certain polyhedron (ultimately, from a polytope)
%in order to generate the (positive part of the) coefficient cone, $C = \ker \mA \cap \R^\mm_>$.
%Obviously, such a polyhedron (polytope) depends on the choice of a basis.
%In our approach, we rewrite the problem in terms of the coefficient polytope $\setP$,
%and for explicit computations, we parametrize $\setP$ via its generators (not via a basis of the underlying subspace).

%Gale dual systems are also used to develop symbolic-numeric algorithms, 
%% to compute positive solutions to a system of polynomial equations
%based on Khovanskii-Rolle continuation~\cite{BatesSotille2011,BatesHauensteinNiemergSottile2016},
%and to study bistability and multistationarity in networks with one chemical reaction~\cite{FeliuHelmer2018}. 

In this work, we present first applications of our approach to real fewnomial theory
through five (classes of) examples,
all of which involve trinomials.
To begin with, we study an underdetermined system of two trinomials in three variables (having two classes)
and parametrize the components of the (infinitely many) solutions,
using sign-characteristic functions (defined in this work) and their roots.
Most importantly, we study
(i) $n$ trinomials involving ${n+2}$ monomials in $n$ variables 
and (ii) one trinomial and one $t$-nomial in two variables. 
For (i), 
we bound the number of positive solutions
by one plus the number of roots of a univariate polynomial of degree at most $n$ on the interval $(-1,1)$. % (with integer exponents).
%Thereby, we improve an ``optimal Descartes' rule of signs for circuits'' 
We also show that this number is always less than or equal to the number of sign changes in an optimal Descartes' rule
given in Bihan et al.~\cite{Bihan2021}.
For (ii), 
we improve upper bounds given in Li et al.~\cite{Li2003} and Koiran et al.~\cite{Koiran2015a}. Further, for two trinomials ($t=3$),
we refine the known upper bound of five in terms of the exponents,
and we find an example with five positive solutions
that is even simpler than the smallest ``Haas system''~\cite{Haas2002}. 

We foresee applications of our approach to many more problems
such as 
existence and uniqueness of positive solutions, for given or all parameters, 
upper bounds on the number of solutions/components beyond trinomials,
and extensions of classical results from reaction network theory. 
For the most general setting of generalized polynomial {\em inequalities},
see the parallel work~\cite{MuellerRegensburger2023a}.
For a characterization of the existence of a unique solution for all parameters
(and a resulting multivariate Descartes' rule of signs), see the very recent work~\cite{DeshpandeMueller2024}.

{\bf Organization of the work.} 
In Section~\ref{sec:help}, 
we formally introduce the geometric objects and auxiliary matrices
required to rewrite system~\eqref{eq:problem}.
In Section~\ref{sec:main}, we instantiate our main result for inequalities in~\cite{MuellerRegensburger2023a} to equations,
in particular, we state Theorem~\ref{thm:main}, % and Corollary~\ref{cor:d0}.
and in~\ref{sec:dim}, we consider the dimensions of all geometric objects.
Finally, in Section~\ref{sec:app},
we apply our results to three examples and two classes of examples,
all of which involve trinomials.
We briefly summarize all examples at the beginning of the section.
Our main results are 
Theorem~\ref{thm:exa_general} for $n$ trinomials involving $n+2$ monomials in $n$ variables,
Theorem~\ref{thm:tnomial} for one trinomial and one $t$-nomial in two variables,
and Theorem %~\ref{thm:tritri} and~
\ref{thm:tritri_exp} for two trinomials.

In Appendix~\ref{app:sc}, we introduce and analyze sign-characteristic functions,
and in Appendix~\ref{app:Rolle}, we elaborate on Rolle's Theorem.

%%%%%%%%%% %%%%%%%%%% %%%%%%%%%% %%%%%%%%%% %%%%%%%%%%

\subsection*{Notation}

We denote the positive real numbers by $\R_>$ and the nonnegative real numbers by $\R_\ge$.
We write $x>0$ for $x \in \R^n_>$ and $x \ge 0$ for $x \in \R^n_\ge$.
For $x \in \R^n$, we define $\e^x \in \R^n_>$ componentwise, and for $x \in \R^n_>$, we define $\ln x \in \R^n$.

For $x \in \R^n_>, \, y \in \R^n$, we define the generalized monomial $x^y = \prod_{i=1}^n (x_i)^{y_i} \in \R_>$,
and for $x \in \R^n_>, \, Y = ( y^1 \ldots y^\mm) \in \R^{n \times \mm}$, 
we define the vector of monomials $x^Y \in \R^\mm_>$ via $(x^Y)_i =x^{y^i}$. % $i=1,\ldots,m$.
% For $x,y \in \R^n_>$ and a linear subspace $V \subseteq \R^n$, we write $x^V=y^V$ if $x^v=y^y$ for all $v \in V$.

For $x,y \in \R^n$, we denote their scalar product by $x \cdot y \in \R$
and their componentwise (Hadamard) product by $x \circ y \in \R^n$,
and for 
\[
\alpha \in \R^\ell , \quad x = \begin{pmatrix} x^1 \\ \vdots \\ x^\ell \end{pmatrix} \in \R^n
\]
with $x^1 \in \R^{n_1}$, \ldots, $x^\ell \in \R^{n_\ell}$ and $n = n_1 + \cdots + n_\ell$,
we introduce
\[
\alpha \odot x = \begin{pmatrix} \alpha_1 \, x^1 \\ \vdots \\ \alpha_\ell \, x^\ell \end{pmatrix} \in \R^n .
\]

We write $\id_n \in \R^{n \times n}$ for the identity matrix and $\ones_n \in \R^n$ for the vector with all entries equal to one.
Further, for $n >1$, we introduce the incidence matrix $\IL_n \in \R^{n \times (n-1)}$ of the (star-shaped) graph 
$n \to 1, \, n \to 2, \, \ldots, \, n \to (n-1)$
with $n$ vertices and $n-1$ edges, 
\[
\IL_n = \begin{pmatrix} \id_{n-1} \\ - \ones_{n-1}^\trans \end{pmatrix} , \quad \text{that is,} \quad
\IL_2 = \begin{pmatrix} 1 \\ - 1 \end{pmatrix} , \;
\IL_3 = \begin{pmatrix} 1 & 0 \\ 0 & 1 \\ -1 & -1 \end{pmatrix} , \text{ etc.}
\]
Clearly, $\ker \IL_n = \{0\}$ and $\ker \IL_n^\trans = \im \ones_n$. 

%%%%%%%%%% %%%%%%%%%% %%%%%%%%%% %%%%%%%%%% %%%%%%%%%%

\section{Geometric objects and auxiliary matrices} \label{sec:help}

From the introduction,
recall the coefficient matrix $\mA \in \R^{\rka \times \mm}$,
the exponent matrix $\mB \in \R^{n \times \mm}$,
and the positive parameter vector $\cc \in \R^\mm_>$.
We introduce geometric objects and auxiliary matrices
required to reformulate system~\eqref{eq:problem}. 

Since $\cc \circ x^\mB \in \R^\mm_>$ for $x \in \R^n_>$, 
we write $\mA \, ( \cc \circ x^\mB ) = 0$ as
\begin{equation}
\left( \cc \circ x^\mB \right) \in C
\end{equation}
with the convex cone
\begin{subequations}
\begin{equation}
C = \ker \mA \cap \R^\mm_> .
\end{equation}
%\end{subequations}
We call $C$ the {\em coefficient cone}.
The closure of $C$ is a polyhedral cone,
\begin{equation*}
\overline C = \ker \mA \cap \R^\mm_\ge ,
\end{equation*}
and
$C = \overline C \cap \R^\mm_> = \relint \overline C$.
In~\cite{MuellerRegensburger2016}, $\overline C$ is called an s-cone (or subspace cone).
Throughout this work, we assume that $C$ is non-empty (as a necessary condition for the existence of solutions)
and that $\mA$ has full row rank.
Further, we assume that
%\begin{subequations}
\begin{equation}
\mA = \begin{pmatrix} \mA_1 & \ldots & \mA_\ell \end{pmatrix} \in \R^{\rka \times \mm}
\end{equation}
has $\ell \ge 1$ blocks $\mA_i \in \R^{\rka \times \mm_i}$ with $\mm_1+\ldots+\mm_\ell = \mm$ 
such that $C$
is the direct product of the convex cones
\begin{equation}
C_i = \ker \mA_i \cap \R^{\mm_i}_> ,
\end{equation}
that is, \begin{equation}
C = 
C_1 \times \cdots \times C_\ell .
\end{equation}

\end{subequations}

Accordingly, 
\begin{equation}
\mB = \begin{pmatrix} \mB_1 & \ldots & \mB_\ell \end{pmatrix} \in \R^{n \times \mm}
\end{equation}
with $\ell$ blocks $\mB_i \in \R^{n \times \mm_i}$ and 
\begin{equation}
\cc= \begin{pmatrix} \cc^1 \\ \vdots \\ \cc^\ell \end{pmatrix} \in \R^\mm_>
\end{equation} 
with $\cc^i \in \R^{\mm_i}_>$.

%The decomposition $C = C_1 \times \cdots \times C_\ell$
The blocks of $\mA = (\mA_1 \,\ldots\, \mA_\ell) \in \R^{\rka \times \mm}$
induce a partition of the indices $\{1,\ldots,\mm\}$ into $\ell$ {\em classes}.
Correspondingly, the columns of $\mB=(b^1,\ldots,b^\mm)$, % the components of $\cc$, 
and hence the monomials $x^{b^j}$, $j=1,\ldots,\mm$
are partitioned into classes.
Our main results hold for any partition that permits a direct product form of $C$,
but the finest partition allows us to maximally reduce the problem dimension.

Indeed,
going from a cone to a bounded set reduces dimensions by one per class.
Hence, we introduce the direct product
\begin{subequations}
\begin{equation}
\Delta = \Delta_{\mm_1-1} \times \cdots \times \Delta_{\mm_\ell-1} 
\end{equation}
of the standard simplices
\begin{equation}
\Delta_{\mm_i-1} = \{ y \in \R^{\mm_i}_\ge \mid \ones_{\mm_i} \cdot y = 1 \} 
\end{equation}
\end{subequations}
and define the bounded set
\begin{subequations}
\begin{equation}
\setP = C \cap \Delta .
\end{equation}
For computational simplicity in examples, 
we often use sets of the form $\tilde \Delta_{\mm_i-1} = \{ y \in \R^{\mm_i}_\ge \mid u \cdot y = 1 \}$ 
for some vector $u \in \relint C_i^*$, where $C_i^*$ is the dual cone of $C_i$.

Clearly,
\begin{equation}
\setP = P_1 \times \cdots \times P_\ell
\end{equation}
with
\begin{equation}
P_i = C_i \cap \Delta_{\mm_i-1} .
\end{equation}
We call $\setP$ the {\em coefficient polytope}. In fact, $\setP$ is a polytope without boundary. Strictly speaking, only its closure $\overline \setP$ is a polytope.
\end{subequations}

Now, let
\begin{equation}
\IL = \begin{pmatrix} \IL_{\mm_1} & & 0 \\ & \ddots & \\ 0 & & \IL_{\mm_\ell} \end{pmatrix} \in \R^{\mm \times (\mm-\ell)}
\end{equation}
be the $\ell \times \ell$ block-diagonal (incidence) matrix
with blocks $\IL_{\mm_i} \in \R^{\mm_i \times (\mm_i-1)}$
and let
\begin{subequations}
\begin{equation}
M = \mB \, \IL \in \R^{n\times(\mm-\ell)} .
\end{equation}
Clearly, $\ker \IL = \{0\}$ and $\ker \IL^\trans = \im \ones_{\mm_1} \times \cdots \times \im \ones_{\mm_\ell}$.
The matrix
\begin{equation}
M = \begin{pmatrix} \mB_1 \IL_{\mm_1} & \ldots &  \mB_\ell \IL_{\mm_\ell} \end{pmatrix}
\end{equation}
\end{subequations}
records the differences of the columns of $B$ within classes,
explicitly, between the first $\mm_i-1$ columns of $B_i$ and its last column, for all $i=1,\ldots,\ell$. 
Hence,
\begin{equation}
\LL = \im M \subseteq \R^n
\end{equation}
is the sum of the linear subspaces associated with the affine spans of the columns of $B$ in the $\ell$ classes.
We call $\LL$ the {\em monomial difference subspace}.
Further, we call
\begin{subequations}
\begin{equation}
\dep = \dim (\ker M)
\end{equation}
the {\em monomial dependency}.
It can be determined as
\begin{equation} \label{eq:dep}
\dep = m-\ell-\dim \LL ,
\end{equation}
\end{subequations}
cf.~\cite[Proposition~1]{MuellerRegensburger2023a}.

We call a system {\em generic} if $M \in \R^{n \times (\mm-\ell)}$ has full (row or column) rank.
A system is generic if and only if $\LL=\R^n$ or $\dep=0$,
cf.~\cite[Proposition~2]{MuellerRegensburger2023a}.

In real fewnomial theory,
it is standard to assume $\LL = \R^n$. Otherwise, dependent variables can be eliminated.
A system with $\dep=0$ (a ``very few''-nomial system) allows for an explicit parametrization of the solution set,
see~Corollary~\ref{cor:d0} below.

To conclude our exposition,
let
\begin{equation}
\JL = \begin{pmatrix} \ones_{\mm_1}^\trans & & 0 \\ & \ddots & \\ 0 & & \ones_{\mm_\ell}^\trans \end{pmatrix} \in \R^{\ell \times \mm}
\end{equation}
be the $\ell \times \ell$ block-diagonal ``Cayley'' matrix
with blocks $\ones_{\mm_i}^\trans \in \R^{1 \times \mm_i}$
and
\begin{equation}
\mBp = \begin{pmatrix} \mB \\ \JL \end{pmatrix} \in \R^{(n+\ell) \times \mm} .
\end{equation}
Clearly, $\JL \, \IL = 0$, in fact, $\ker \JL = \im \IL$.
We call 
\begin{subequations}
\begin{equation}
D = \ker \mBp % \IL (\ker M) 
\subset \R^\mm
\end{equation}
the {\em monomial dependency subspace}.

Finally,
let $\mG \in \R^{(\mm-\ell) \times \dep}$ represent a basis for $\ker M$, that is, $\im \mG = \ker M$ (and $\ker \mG = \{0\}$),
and let $\mH = \IL \, \mG \in \R^{\mm \times \dep}$. 
Then $\mH$ represents a basis of $\ker \mBp$,
that is, 
\begin{equation}
D = \ker \mBp = \im \mH 
\quad (\text{and }
\dim D = \dep) ,
\end{equation}
\end{subequations}
cf.~\cite[Lemma~4]{MuellerRegensburger2023a}.

For illustrations of all geometric objects and auxiliary matrices,
we refer the reader to 
Example~\ref{exa:tri3d} (two trinomials in three variables with $\dep=1$ and $\ell=2$ classes),
Example~\ref{exa:Bihan} (two overlapping trinomials in two variables with $\dep=1$ and $\ell=1$), and
Example~\ref{exa:tritri} (two trinomials in two variables with $\dep=2$ and $\ell=2$),
in Section~\ref{sec:app}.
%They can be read before or after the presentation of our main results.

%%%%%%%%%% %%%%%%%%%% %%%%%%%%%% %%%%%%%%%% %%%%%%%%%%

\section{Main result for equations} \label{sec:main}

Using the geometric objects and auxiliary matrices introduced in Section~\ref{sec:help},
we instantiate \cite[Theorem~5]{MuellerRegensburger2023a} for parametrized systems of generalized polynomial inequalities
(from inequalities to equations).

\begin{thm} \label{thm:main}
Consider the parametrized system of generalized polynomial equations $\mA \, (\cc \circ x^\mB) = 0$ for the positive variables $x \in \R^n_>$,
given by a real coefficient matrix $\mA \in \R^{\rka \times \mm}$,
a real exponent matrix $\mB \in \R^{n \times \mm}$, 
and a positive parameter vector $\cc \in \R^\mm_>$.
The solution set $Z_\cc = \{ x \in \R^n_> \mid \mA \, (\cc \circ x^\mB) = 0 \}$ 
can be written as
\[
Z_\cc = \{ (y \circ \cc^{-1})^E \mid y \in Y_\cc \} \circ \e^{\LL^\perp}
\quad \text{with} \quad
Y_\cc = \{ y \in \setP \mid y^z = c^z \text{ for all } z \in D \} .
\]
Here, $\setP$ is the coefficient polytope, $D$ is the monomial dependency subspace, $\LL$ is the monomial difference subspace,
and the matrix $E = \IL \, M^*$ is given by the (incidence) matrix $\IL$ and a generalized inverse $M^*$ of $M = B \, \IL$.
\end{thm}

Theorem~\ref{thm:main} can be read as follows:
In order to determine the solution set $Z_\cc = \{ x \in \R^n_> \mid \mA \, (\cc \circ x^\mB) = 0 \}$,
first determine the {\em solution set on the coefficient polytope},
\begin{equation} \label{eq:solpol}
Y_\cc = \{ y \in P \mid y^z = \cc^z \text{ for all } z \in D \} .
\end{equation}
The coefficient polytope $\setP$ is determined by the coefficient matrix~$\mA$ (and its classes),
and the dependency subspace $D$ is determined by the exponent matrix~$\mB$ ({\em and} the classes of $\mA$).
Explicitly, using $D = \im \mH$ with $\dep = \dim D$, there are $\dep$ binomial equations \begin{equation} y^{\mH} = \cc^{\mH} , \end{equation}
which depend on the parameters via $\dep$ monomials $\cc^{\mH}$.

To a solution $y \in Y_\cc$ on the coefficient polytope,
there corresponds the actual solution $x = (y \circ c^{-1})^E \in Z_\cc$.
%If $\dim \LL < n$, there is an additional exponential/monomial parametrization,
%that is, a whole set of solutions $x \circ e^{\LL^\perp} \subseteq Z_\cc$.
In fact, if (and only if) $\dim L < n$, 
then $y \in Y_\cc$ corresponds to an exponential manifold of solutions, $x \circ \e^{L^\perp} \subset Z_\cc$.
By~\cite[Proposition~6]{MuellerRegensburger2023a},
the set $Z_\cc / e^{\LL^\perp}$, that is, the equivalence classes of $Z_\cc$ (given by $x' \sim x$ if $x'=x \circ \e^v$ for some $v \in L^\perp$),
and the set $Y_\cc$ are in one-to-one correspondence.
\begin{pro}
There is a bijection between $Z_\cc / e^{\LL^\perp}$ and $Y_\cc$.
\end{pro}

An important special case arises if $\dep=0$ (the ``very few''-nomial case) and hence $Y_\cc = P$.
Then the solution set $Z_\cc$ in Theorem~\ref{thm:main}
has an explicit parametrization.

\begin{cor} \label{cor:d0}
If $\dep=0$,
then
\[
Z_\cc = \{ (y \circ \cc^{-1})^E \mid y \in P \} \circ \e^{\LL^\perp} ,
\]
in particular, the parametrization is explicit,
and there exists a solution for all~$\cc$.

If $\mA \, (\cc \circ x^\mB)=0$ is a system of $n$~equations in $n$~unknowns,
then there exists a unique solution for all $\cc$.
\end{cor}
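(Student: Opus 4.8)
The first statement is an immediate specialization of Theorem~\ref{thm:main}. If $\dep=0$, then $\dim D=\dep=0$, hence $D=\{0\}$ and the condition ``$y^z=\cc^z$ for all $z\in D$'' in the description of $Y_\cc$ is vacuous (the only $z$ is $0$, and $y^0=1=\cc^0$); thus $Y_\cc=\intP$. Substituting $Y_\cc=\intP$ into the formula of Theorem~\ref{thm:main} gives the displayed expression for $Z_\cc$, which I would emphasize is \emph{explicit} precisely because $y$ ranges over the whole polytope $\intP$ with no residual equations to be solved. For non-emptiness: by the standing assumption $C_>\neq\emptyset$, and normalizing any $c\in C_>=\ker\mA\cap\R^\mm_>$ blockwise on the simplices $\Delta_{\mm_i-1}$ yields a point of $\intP=C_>\cap\Delta$; then $(y\circ\cc^{-1})^E\in\R^n_>$ for every such $y$ and every $\cc\in\R^\mm_>$, so $Z_\cc\neq\emptyset$ for all parameters.

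For the second statement ($\rka=n$, i.e.\ $n$ equations in the $n$ variables $x_1,\ldots,x_n$), the plan is to show that $\dep=0$ together with $\rka=n$ forces $\mm=n+\ell$, which collapses both the polytope factor $\intP$ and the exponential factor $\e^{\LL^\perp}$ of $Z_\cc$ to a single point. First, $\dep=0$ and the relation $\dep=\mm-\ell-\dim\LL$ established earlier give $\dim\LL=\mm-\ell$; since $\LL=\im M$ with $M\in\R^{n\times(\mm-\ell)}$, full column rank forces $\mm-\ell\le n$. Second, I would compute $\dim P$: the polytope $P=C\cap\Delta$ is a cross-section of the coefficient cone $C=\ker\mA\cap\R^\mm_\ge$, whose span is $\ker\mA$ (as $C_>=\relint C\neq\emptyset$), of dimension $\mm-\rka=\mm-n$; intersecting $\ker\mA$ with the affine hull $\{y\mid\JL y=\ones_\ell\}$ of $\Delta$ lowers the dimension by exactly $\ell$, because $\ker\mA=\ker\mA_1\times\cdots\times\ker\mA_\ell$ (from the block hypothesis $C=C_1\times\cdots\times C_\ell$ together with $C_>\neq\emptyset$) and on each block $\ones_{\mm_i}^\trans$ is a nonzero functional on $\ker\mA_i$ (being strictly positive on the nonempty set $C_i\cap\R^{\mm_i}_>$), so $\JL$ maps $\ker\mA$ onto $\R^\ell$; since $\intP\neq\emptyset$ supplies a relative interior point, $\dim P=\mm-n-\ell$. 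As $\dim P\ge 0$, this gives $\mm-\ell\ge n$, hence $\mm-\ell=n$. Consequently $\LL$ is an $n$-dimensional subspace of $\R^n$, so $\LL=\R^n$ and $\LL^\perp=\{0\}$, while $\dim P=0$ makes the nonempty polytope $P=\intP$ a single point $\{\bar y\}$. By the first part, $Z_\cc=\{(\bar y\circ\cc^{-1})^E\}$ is a singleton for every $\cc\in\R^\mm_>$, which is the claimed unique existence.

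\textbf{Main obstacle.} The only step beyond bookkeeping is the dimension count $\dim P=\mm-\rka-\ell$, i.e.\ the transversality of $\ker\mA$ to the affine slice $\{\JL y=\ones_\ell\}$ cut out by $\Delta$; its crux is that $\ones_{\mm_i}^\trans$ does not vanish on $\ker\mA_i$, which is exactly where the hypotheses $C_>\neq\emptyset$ and $C=C_1\times\cdots\times C_\ell$ enter. If the dimension formulas for the geometric objects are developed beforehand in~\ref{sec:dim}, this step can simply be cited. Once $\mm=n+\ell$ is established, the vanishing of $\LL^\perp$, the zero-dimensionality of $\intP$, and hence the uniqueness of the solution for all $\cc$ follow at once from Theorem~\ref{thm:main}.
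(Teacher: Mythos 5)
Your argument is correct and follows essentially the same route as the paper: the first part is the immediate specialization $D=\{0\}\Rightarrow Y_\cc=\intP$, and the second part rests on the same dimension count $\dim P=\mm-\ell-\rka\ge 0$ and $\dim\LL^\perp=n-(\mm-\ell)\ge 0$, whose sum is $n-\rka=0$ when $\rka=n$, forcing both to vanish. The only difference is that you spell out the transversality justification for $\dim P=\mm-\ell-\rka$ (via $\ones_{\mm_i}^\trans$ being nonzero on $\ker\mA_i$), which the paper simply defers to its dimensional considerations in Section~\ref{sec:dim}.
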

\begin{proof}
The first statement is an instance of Theorem~\ref{thm:main}.
It remains to prove the second statement.
By Equation~\eqref{eq:dep}, $\dep = \mm-\ell -\dim \LL = 0$.
Hence $\dim \LL = \mm-\ell$ and $\dim \LL^\perp = n - (\mm-\ell) \ge 0$.
By the dimensional considerations below,  $\dim P = \mm - \ell - \rka \ge 0$.
Hence, $\dim P + \dim \LL^\perp = n-\rka$.
If $\rka=n$, that is, if $\mA \, (\cc \circ x^\mB)=0$ is a system of $n$~equations in $n$~unknowns,
then $\dim P = \dim \LL^\perp = 0$,
and
there exists a unique solution for all $\cc$.
\end{proof}

%%%%%%%%%% %%%%%%%%%% %%%%%%%%%% %%%%%%%%%% %%%%%%%%%%

\subsection{Dimensional considerations} \label{sec:dim}

The crucial quantities that appear in (the dimensions of) the geometric objects are the number of variables $n$,
the number of equations $\rka$,
the number of monomials $\mm$, and the number of classes $\ell$.

First, we consider the dimensions of the coefficient cone and polytope, $C$ and~$\setP$, 
arising from the coefficient matrix $\mA \in \R^{\rka \times \mm}$.
Recall that $\mA$ has full row rank and $C = \ker A \cap \R^\mm_> \neq \emptyset$.
Hence, 
\begin{subequations}
\begin{equation}
\dim C = \dim (\ker \mA) = \mm - \rka .
\end{equation}
Going from the cone to the polytope reduces dimensions by one per class.
Hence,
\begin{equation}
\dim P = \dim C - \ell= \mm - \rka - \ell .
\end{equation}
\end{subequations}

Second, we consider the dimensions of the monomial difference/dependency subspaces, $\LL$ and $D$, 
arising from the exponent matrix $\mB \in \R^{n \times \mm}$.
The following case distinction illuminates the interplay of $\dim \LL$ and the monomial dependency $\dep=\dim D$.
Here, we assume the generic case where $M \in \R^{n \times (\mm-\ell)}$ has full (row or column) rank,
that is, $\dim \LL = \min(n,\mm-\ell)$.
\begin{itemize}
\item
$\mm \le n+\ell$:

Here, $\dim \LL = \mm -\ell$ and hence $\dep = \mm-\ell-\dim \LL = 0$.
This is the monomial dependency zero (or ``very few''-nomial) case.

We have a further case distinction.
\begin{itemize}
\item[]
$\mm < n+\ell$: $\dim \LL < n$ (that is, $\LL^\perp\neq\{0\}$).
\item[]
$\mm = n+\ell$: $\dim \LL = n$ (that is, $\LL^\perp=\{0\}$).
\end{itemize}
\item
$\mm > n+ \ell$:

Here, $\dim \LL = n$ (that is, $\LL^\perp=\{0\}$) and hence $\dep = \mm-\ell-n > 0$.
\end{itemize}

As a consequence, for generic systems with non-zero monomial dependency,
there is no exponential/monomial parametrization involved.
\begin{cor}
If a system is generic and $\dep>0$, then
\[
Z_\cc = \{ (y \circ \cc^{-1})^E \mid y \in Y_\cc \} 
\quad \text{with} \quad
Y_\cc = \{ y \in P \mid y^z = \cc^z \text{ for all } z \in D \} .
\]
%in particular, the parametrization is implicit.
\end{cor}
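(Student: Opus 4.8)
The plan is to derive this corollary directly from Theorem~\ref{thm:main} by showing that the monomial/exponential factor $\e^{\LL^\perp}$ collapses to $\{\ones_n\}$ whenever the system is generic with $\dep>0$. First I would invoke the case distinction in Section~\ref{sec:dim}: genericity means $M \in \R^{n \times (\mm-\ell)}$ has full row or column rank, i.e.\ $\dim \LL = \min(n, \mm-\ell)$, and $\dep = \mm - \ell - \dim \LL > 0$ forces $\mm - \ell > n$, hence $\dim \LL = n$, that is $\LL = \R^n$.

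Second, from $\LL = \R^n$ it follows that $\LL^\perp = \{0\} \subseteq \R^n$, so $\e^{\LL^\perp} = \{\e^0\} = \{\ones_n\}$, and componentwise multiplication by $\ones_n$ is the identity on $\R^n_>$. Substituting this into the formula of Theorem~\ref{thm:main} yields
\[
Z_\cc = \{ (y \circ \cc^{-1})^E \mid y \in Y_\cc \} \circ \{\ones_n\} = \{ (y \circ \cc^{-1})^E \mid y \in Y_\cc \},
\]
with $Y_\cc = \{ y \in \intP \mid y^z = \cc^z \text{ for all } z \in D \}$ unchanged, since $Y_\cc$ does not depend on $\LL$. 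This is exactly the claimed description.

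There is essentially no obstacle here: the corollary is a specialization of the main theorem once one reads off $\LL = \R^n$ from the genericity hypothesis and $\dep > 0$. The only point that deserves a sentence is the justification that $\dep > 0$ together with genericity cannot coexist with $\mm - \ell \le n$ — this is precisely the content of the first bullet of the dimensional considerations, where $\mm \le n + \ell$ in the generic case gives $\dep = 0$, contradicting $\dep > 0$. So the proof is a two-line argument: (i) genericity $+$ $\dep>0$ $\Rightarrow$ $\LL = \R^n$ $\Rightarrow$ $\LL^\perp = \{0\}$; (ii) plug into Theorem~\ref{thm:main} and note $\circ\, \e^{\{0\}}$ is trivial.
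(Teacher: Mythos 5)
Your proposal is correct and matches the paper's own reasoning: the corollary is stated immediately after the dimensional case distinction, which establishes exactly that genericity together with $\dep>0$ forces $\mm-\ell>n$, hence $\LL=\R^n$, $\LL^\perp=\{0\}$, and the factor $\e^{\LL^\perp}$ becomes trivial in the formula of Theorem~\ref{thm:main}. Nothing is missing.
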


%(\em Remark.)
If a system is generic and $\dep>0$, then $\dim \LL=n$ and hence $\dep =\mm-\ell-n$.
Further, $\dim P = \mm - \ell - \rka \ge 0$.
If $\rka=n$,
that is, if $\mA \, (\cc \circ x^\mB)=0$ is a system of $n$~equations in $n$~unknowns,
then $\dim P = \dep$,
and there are $\dep$ binomial equations on a $\dep$-dimensional coefficient polytope.
Still, $Y_\cc$
(the solution set on the coefficient polytope) %(and the corresponding solution set~$Z_\cc$) 
can be empty, finite, or infinite
(unlike for $\dep=0$).

%%%%%%%%%% %%%%%%%%%% %%%%%%%%%% %%%%%%%%%% %%%%%%%%%%

\section{Applications to fewnomials} \label{sec:app}

We apply our fundamental results on parametrized systems of generalized polynomial equations to fewnomial systems.
In particular, 
we study three examples and two classes of examples,
all of which involve trinomials.
We choose them to cover several ``dimensions'' of the problem.
In particular, we consider examples 
\begin{itemize}
\item 
with monomial dependency $\dep=1$ or $\dep \ge 2$,
\item
with $\ell=1$ or $\ell=2$ classes,
and
\item
with finitely or infinitely many solutions, thereby providing bounds on the number of solutions or components.
\end{itemize}

Ordered by the dependency,
we study the following fewnomial systems:
\begin{itemize}
\item $\dep=1$:
\begin{itemize}
\item
Example~\ref{exa:tri3d}: \\
{\bf Two trinomials in three variables}

We parametrize the components of the (infinitely many) positive solutions,
using sign-characteristic functions (defined in this work).
\item
Example~\ref{exa:Bihan}: \\
{\bf Two overlapping trinomials involving four monomials in two variables}

We bound the number of positive solutions by one plus the number of roots of a univariate polynomial of degree at most two on the interval $(-1,1)$.
We also show that this number (which depends on both exponents and coefficients)
is always less than or equal to the number of sign changes in an optimal Descartes' rule (which depends on exponents and orders of coefficients).
%Thereby, we improve a Descartes' rule of signs in terms of exponents and orders of coefficients.
In concrete systems, we demonstrate that our new upper bound improves the Descartes' rule.

\item
Class~\ref{cla:Bihan}: \\
{\bf $n$ overlapping trinomials involving $(n+2)$ monomials in $n$ variables}

We extend the result for the case $n=2$ in Example~\ref{exa:Bihan} to the general class.
%In doing so, we improve an ``optimal Descartes' rule of signs for circuits''~\cite{Bihan2021}
%and offer a simple proof using our approach.

\end{itemize}
\item $\dep\ge2$:
\begin{itemize}
\item
Class~\ref{cla:tri-t}: \\
{\bf One trinomial and one $t$-nomial in two variables} 

We improve known upper bounds %~\cite{Li2003,Koiran2015a} 
in terms of $t$.

\item
Example~\ref{exa:tritri}: \\
{\bf Two trinomials in two variables}

We reconsider the simplest case $t=3$ in Class~\ref{cla:tri-t}.

We refine the known upper bound of five (for the number of finitely many positive solutions) %~\cite{Li2003}
in terms of the exponents,
and we find an example with five positive solutions
that is even simpler than the smallest ``Haas system''. %~\cite{Haas2002,Dickenstein2007}. 
\end{itemize}
\end{itemize}

%%%%%%%%%% %%%%%%%%%% %%%%%%%%%% %%%%%%%%%% %%%%%%%%%%

\subsection{Dependency one}

\begin{exa} \label{exa:tri3d}
We consider {\bf two trinomials in three variables}, that is,
\begin{align*}
c_1 \, x^{b^1} + c_2 \, x^{b^2} - 1 &= 0 , \\ 
c_3 \, x^{b^3} + c_4 \, x^{b^4} - 1 &= 0 ,
\end{align*}
with $x \in \R^3_>$ and $b^1, b^2, b^3, b^4 \in \R^3$.
In compact form,
$\mA \, (\cc \circ x^\mB)$ with
\[
\mA = \begin{pmatrix} 1 & 1 & -1 & 0 & 0 & 0 \\ 0 & 0 & 0 & 1 & 1 & -1 \end{pmatrix} , \quad
\mB = \begin{pmatrix} 
b^1& b^2 & 0 & b^3 & b^4 & 0
%b^1_1 & b^2_1 & 0 & b^3_1 & b^4_1 & 0 \\
%b^1_2 & b^2_2 & 0 & b^3_2 & b^4_2 & 0 \\
%b^1_3 & b^2_3 & 0 & b^3_3 & b^4_3 & 0 
\end{pmatrix} ,
\quad
\cc = \begin{pmatrix} c_1 \\ c_2 \\ 1 \\ c_3 \\ c_4 \\ 1 \end{pmatrix} .
\]
Clearly, $\mm=6$, $n=3$, and $\ell=2$. 
% By our dimensional considerations, $\mm > n + \ell$ implies $\dep=\mm-\ell-n=1$.

Crucial coefficient data, arising from $\mA = \begin{pmatrix} \mA_1 & \mA_2 \end{pmatrix}$ with $\mA_i \in \R^{2 \times 3}$, are
\[
C = C_1 \times C_2
\quad \text{with} \quad
C_i = \ker \mA_i \cap \R^3_>
\quad \text{and} \quad
\ker \mA_i = \im \begin{pmatrix} 1 & 0 \\ 0 & 1 \\ 1 & 1 \end{pmatrix} ,
\]
and further
\[
\setP = P_1 \times P_2
\quad \text{with} \quad
P_i = C_i \cap \Delta_2 ,
\]
where we use the (non-standard) simplex $\Delta_2 = \{ y \in \R^3_\ge \mid \ones_3 \cdot y = 2 \}$.
Hence, for $y \in P$, we have
\[
y = \begin{pmatrix} y^1 \\ y^2 \end{pmatrix}
\quad \text{with} \quad
y^i = \lambda_i \begin{pmatrix} 1 \\ 0 \\ 1 \end{pmatrix} + (1-\lambda_i) \begin{pmatrix} 0 \\ 1 \\ 1 \end{pmatrix}
= \begin{pmatrix} \lambda_i \\ 1-\lambda_i \\ 1 \end{pmatrix}
\]
and $\lambda_1, \lambda_2 \in (0,1)$.
We assume the generic case, where
\[
M = \mB \, \begin{pmatrix} \IL_3 & 0 \\ 0 & \IL_3 \end{pmatrix} = \begin{pmatrix} 
b^1& b^2 & b^3 & b^4
\end{pmatrix}
\]
has full (row) rank.
For the simplicity of computations and without loss of generality, we assume that the first three exponent vectors are linearly independent.
After a change of variables, the exponent matrix takes the simple form
\[
\mB = \begin{pmatrix} 
1 & 0 & 0 & 0 & b_1 & 0 \\
0 & 1 & 0 & 0 & b_2 & 0 \\
0 & 0 & 0 & 1 & b_3 & 0
\end{pmatrix} .
\]
That is, we actually consider 
\begin{align*}
c_1 \, x_1 + c_2 \, x_2 - 1 &= 0 , \\ 
c_3 \, x_3 + c_4 \, x_1^{b_1} x_2^{b_2} x_3^{b_3} - 1 &= 0 . \\ 
\end{align*}
Crucial exponent data are
\[
\mBp = \begin{pmatrix} 
1 & 0 & 0 & 0 & b_1 & 0 \\
0 & 1 & 0 & 0 & b_2 & 0 \\
0 & 0 & 0 & 1 & b_3 & 0 \\
1 & 1 & 1 & 0 & 0 & 0 \\
0 & 0 & 0 & 1 & 1 & 1
\end{pmatrix} 
\quad \text{and} \quad
D = \ker \mBp = \im z
\]
with
\[
z=
\begin{pmatrix} b_1 \\ b_2 \\ -(b_1+b_2) \\ b_3 \\ -1 \\ 1 - b_3 \end{pmatrix} .
\]
In particular, $\dep = \dim D = 1$.
Now, we can consider the binomial equation on the coefficient polytope, 
\[
y^z = \cc^z ,
\quad \text{that is,} \quad
\lambda_1^{b_1} (1-\lambda_1)^{b_2} \lambda_2^{b_3} (1-\lambda_2)^{-1} = c_1^{b_1} c_2^{b_2} c_3^{b_3} c_4^{-1} =: c^* .
\]
After separating variables, we find
\begin{equation*} \label{eq:exa:intro} 
\lambda_1^{b_1} (1-\lambda_1)^{b_2} = c^* \lambda_2^{-b_3} (1-\lambda_2)^{1} ,
\end{equation*}
and, using sign-characteristic functions, we write
\[
s_{b_1,b_2}(\lambda_1)  = c^* s_{-b_3,1}(\lambda_2) .
\]
(For the definition of sign-characteristic functions $s$ and their roots (inverses) $r$, see Appendix~\ref{app:sc}.)

We distinguish three cases regarding the signs of the exponents.
\begin{enumerate}
\item
If $b_1 \cdot b_2 < 0$ or $b_3>0$, then at least one of the two sign-characteristic functions is strictly monotonic.
If $s_{-b_3,1}$ is monotonic and hence invertible,
we obtain the parametrization
\[
\lambda_2 = r_{-b_3,1} \left( s_{b_1,b_2}(\lambda_1) / c^* \right) ,
\]
having one component, for all $c^*$.

\begin{minipage}{0.93\textwidth}
We plot the parametrization for $b_3=2$ and $b_1=1,b_2=2$ (and $c^* = 1$).
\begin{center}
\includegraphics[width=0.3\textwidth]{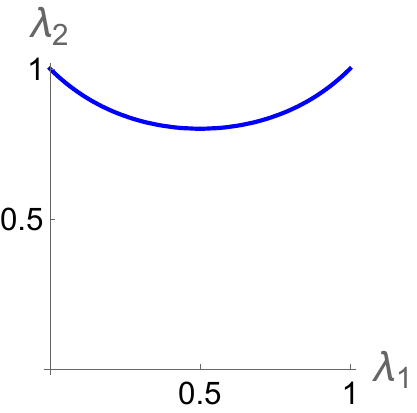}
\end{center}
\end{minipage}

If $s_{b_1,b_2}$ is monotonic and hence invertible,
we obtain the parametrization
\[
\lambda_1 = r_{b_1,b_2} \left( c^* s_{b_1,b_2}(\lambda_2) \right) .
\]
\item
If $b_1, b_2 > 0$ and $b_3<0$, then both sign-characteristic functions have a maximum.
For
\[
c^* \neq \frac{s_{b_1,b_2}^{\max}}{s_{-b_3,1}^{\max}} ,
\]
that is, for generic $c^*$,
the parametrization has two components.

\begin{minipage}{0.93\textwidth}
We plot the parametrization for $b_3=-2$ and $b_1=1,b_2=2$.
Since $s_{b_1,b_2}^{\max} = s_{-b_3,1}^{\max} = \frac{4}{27}$,
the two components intersect for $c^*=1$.
\begin{center}
\includegraphics[width=0.3\textwidth]{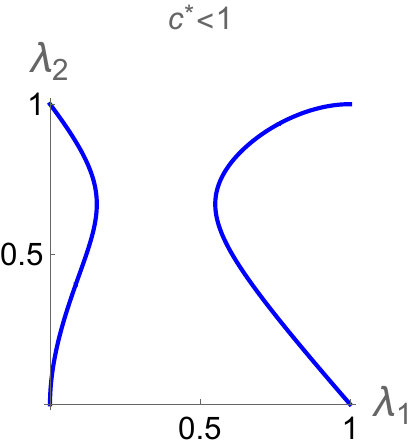} 
\includegraphics[width=0.3\textwidth]{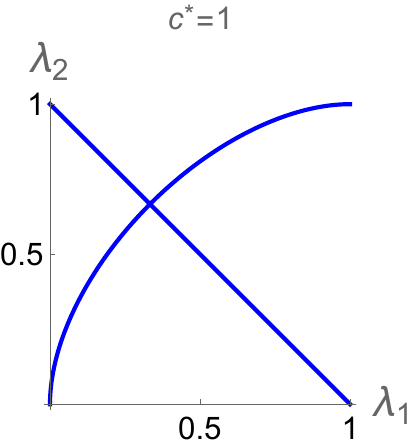}
\includegraphics[width=0.3\textwidth]{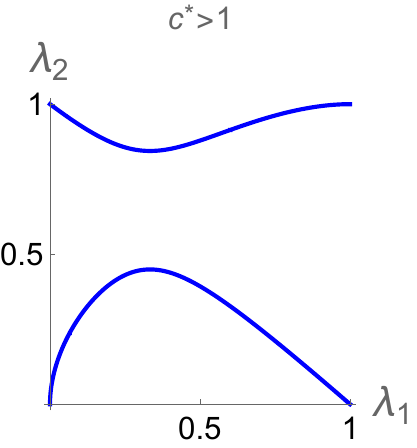}
\end{center}
\end{minipage}

\item
If $b_1, b_2 < 0$ and $b_3<0$, then the first sign-characteristic function has a minimum, and the second one has a maximum.
For
\[
c^* \ge \frac{s_{b_1,b_2}^{\min}}{s_{-b_3,1}^{\max}} ,
\]
that is, for large enough $c^*$, 
solutions exist, and the parametrization has one component.

\begin{minipage}{0.93\textwidth}
We plot the parametrization for $b_1=-1,b_2=-2$ and $b_3=-2$.
Since $s_{b_1,b_2}^{\min} = \frac{27}{4}$ and $s_{-b_3,1}^{\max} = \frac{4}{27}$,
solutions exist for $c^* \ge c^\text{crit} := (\frac{27}{4})^2$.
\begin{center}
\includegraphics[width=0.3\textwidth]{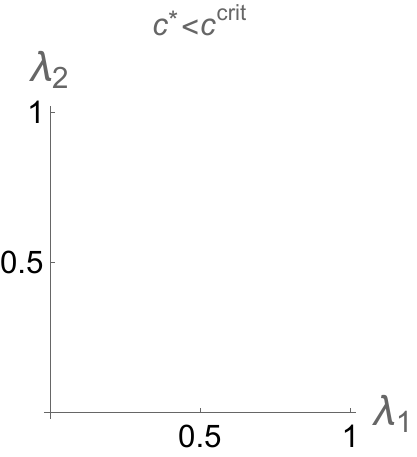} 
\includegraphics[width=0.3\textwidth]{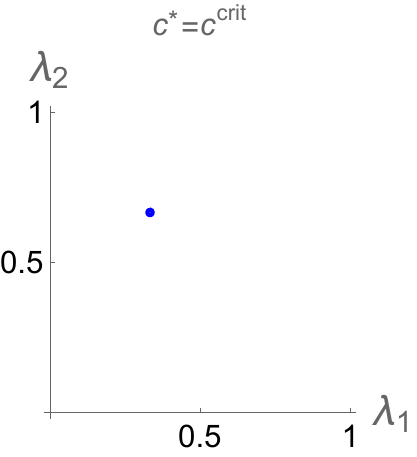}
\includegraphics[width=0.3\textwidth]{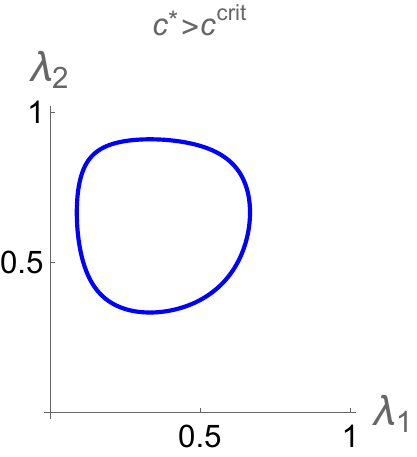}
\end{center}
\end{minipage}
\end{enumerate}

The parametrizations on the (open) unit square $(0,1)^2$ yield parametrizations of $Y_\cc$ (the solution set on the coefficient polytope)
and, ultimately, parametrizations of $Z_\cc$ (the set of positive solutions to $\mA \, (\cc \circ x^\mB)=0$).
\end{exa}

%%%%%%%%%% %%%%%%%%%% %%%%%%%%%% %%%%%%%%%% %%%%%%%%%%

\begin{exa} \label{exa:Bihan}
We consider {\bf two overlapping trinomials involving four monomials in two variables},
that is,
\[
\mA \, x^\mB = 0
\quad\text{with}\quad
\mA ,\mB \in \R^{2 \times 4} .
\]
Clearly, $\mm=4$, $n=2$, and we assume $\ell=1$ and the generic case, that is, $\dim L = 2$.
Hence, $d = m - \ell- \dim L = 1$.

By our global assumption, $\mA$ has full row rank and $C = \ker \mA \cap \R^4_> \neq \emptyset$.
Hence, $\overline C = \ker \mA \cap \R^4_\ge$ is generated by two support-minimal, non-negative vectors.
Note that $\overline C$ is an {\em s-cone} (subspace cone), arising from a linear subspace and non-negativity conditions,
cf.~\cite{MuellerRegensburger2016}.
Further, the supports of the two generators overlap.
(Otherwise, there are $\ell=2$ classes. 
Then, $\dim \setP=0$ and $\dep=0$,
and there exists exactly one solution, by Corollary~\ref{cor:d0}.)

Explicitly, let $\bar y^1,\bar y^2 \in \R^4_\ge$ be two non-proportional, support-minimal vectors of~$\overline C$.
They are the vertices of $\overline \setP$, the closure of a coefficient polytope.
In terms of the sum and difference vectors
\[
\yc = \frac{\bar y^1 + \bar y^2}{2} > 0 
\quad\text{and}\quad
\yp = \frac{\bar y^1 - \bar y^2}{2} ,
\]
the polytope can be written as
\[
\setP = \{ \yc + t \yp \mid t \in (-1,1) \} .
\]

Now, let
\[
q = \yp \circ \yc^{-1} \in \R^4 ,
\]
which has the following properties:
(i) $|q_i|\le1$;
(ii) there are $i\neq j \in \{1,2,3,4\}$ such that $\bar y^1_i=\bar y^2_j=0$
and hence $q_i=1$ and $q_j=-1$;
and (iii) the order on the components of $q$ does not depend on positive scalings of the vertices $\bar y^1$ and $\bar y^2$.

Without loss of generality, we assume that the components of $q$ are in decreasing order,
that is,
\[
q = \begin{pmatrix} 1 \\ q_2 \\ q_3 \\ -1 \end{pmatrix} 
\]
with $1 = q_1 \ge q_2 \ge q_3 \ge q_4 = -1$.

%By assumption, $\bar y^1,\bar y^2 \in \R^4_\ge$ have overlapping support,
%and one can assume that they agree in one component.
%Then, $q_i = 0$ for some $1<i<4$,
%that is, $q_2=0$ or $q_3=0$.
%
This concludes the analysis of the coefficient matrix, and we turn to the exponent matrix.

Since $\dep = 1$,
\[
D = \ker \mBp = \ker \begin{pmatrix} \mB \\ \ones_4^\trans \end{pmatrix} = \im b
\quad \text{with nonzero}\quad
b \in \R^4 .
\]
%Unlike as in \cite{Bihan2021}, 
%we allow components of $b$ to be zero, that is, affine dependencies between the exponent vectors,
%that is, we do not restrict to ``circuits''.

Now, we can consider the binomial equation on the coefficient polytope, 
\[
y^b = 1.
\]
Equivalently, $(y \circ \yc^{-1})^b = \yc^{-b}$ with $y \circ \yc^{-1} = (\yc + t \yp) \circ \yc^{-1} = 1_4 + t q$, that is,
\[
f(t) = c^*
\]
with
\begin{align*}
f(t) & := (1_4 + t q)^b = \prod_{i=1}^4 (1 + t q_i)^{b_i} , \\
c^* & := \yc^{-b} = \prod_{i=1}^4(\yc_i)^{-b_i} .
\end{align*}
In order to bound the number of solutions to $f(t)=c^*$,
we analyze the function $f \colon (-1,1) \to \R_>$.
Most importantly, the function
\[
g(t) := \frac{f'(t)}{f(t)} = \sum_{i=1}^4 \frac{b_i q_i}{1 + t q_i}
\]
has the same zeros as $f'(t)$. 
By Rolle's theorem,
\[
Z(f) \le 1 + Z(g) ,
\]
where $Z(f)$ denotes the number of distinct real zeros of a function $f$ (on a given interval).

Now, recall that $1_4 \cdot b = 0$.
Using partial sums of $b$,
we rewrite the function $g$ in order to achieve two goals:
(1) to demonstrate that $g$ has the same zeros as a univariate polynomial of degree at most two;
and (2) to obtain an optimal Descartes' rule of signs (in terms of the partial sums),
cf.~\cite[Part~V, Chap.~1, \S~1, Prob.~21]{PolyaSzegoeD,PolyaSzegoe}.

Indeed, using the ``telescoping sum''
\begin{equation} \label{eq:telescope}
\sum_{i=1}^k \beta_i \gamma_i = \sum_{i=1}^{k-1} \left( \textstyle \sum_{j=1}^i \beta_j \right) \left( \gamma_i - \gamma_{i+1} \right)
\quad\text{if}\quad
\sum_{i=1}^k \beta_i = 0 ,
\end{equation}
% which has been applied already in the proof of the ``deficiency-one theorem''~\cite[Equation~6.2.12]{Feinberg1995a},
cf.~\cite[Equation~6.2.12]{Feinberg1995a},
and introducing the family of functions
\begin{equation} \label{eq:fqq}
\begin{aligned}
f_{q,q'} \colon (-1,1) & \to \R_> , \\
t &\mapsto \frac{q}{1 + t q} - \frac{q'}{1 + t q'} = \frac{q-q'}{(1 + t q)(1 + t q')} , 
\end{aligned}
\end{equation}
for $q,q' \in [-1,1]$,
we find
\begin{equation} \label{eq:g}
g(t) = \sum_{i=1}^3 \left( \textstyle \sum_{j=1}^i b_j \right) f_{q_i,q_{i+1}}(t) .
\end{equation}
Explicitly,
\small
\begin{align*}
%g(t) &= b_1 \left( \frac{q_1}{1 + t q_1} - \frac{q_2}{1 + t q_2} \right) \\
%&\quad+ (b_1+b_2) \left( \frac{q_2}{1 + t q_2} - \frac{q_3}{1 + t q_3} \right) \\
%&\quad+ (b_1+b_2+b_3) \left( \frac{q_3}{1 + t q_3} - \frac{q_4}{1 + t q_4} \right)  \\
g(t) = & + b_1 \, \frac{q_1 - q_2}{(1 + t q_1)(1 + t q_2)} \\
& + (b_1+b_2) \, \frac{q_2 - q_3}{(1 + t q_2)(1 + t q_3)} \\
& + (b_1+b_2+b_3) \, \frac{q_3 - q_4}{(1 + t q_3)(1 + t q_4)} .
\end{align*}
\normalsize
(1) With $D(t):= \prod_{i=1}^4 (1 + t q_i)$, the function
\[
p_2(t) := g(t) \, D(t) 
\]
is a univariate polynomial of degree at most two with the same zeros as $g(t)$. 
Explicitly,
\small
\begin{align*}
p_2(t) = & + b_1 \, (q_1 - q_2)(1 + t q_3)(1 + t q_4) \\
& + (b_1+b_2) \, (q_2 - q_3)(1 + t q_1)(1 + t q_4) \\
& + (b_1+b_2+b_3) \, (q_3 - q_4)(1 + t q_1)(1 + t q_2) .
\end{align*}
\normalsize

(2) We apply
Descartes' rule of signs {\em for functions},
as stated in the famous 1925 book by P\'olya and Szeg\"o.
\begin{thm}[\cite{PolyaSzegoeD,PolyaSzegoe}, Part~V, Chap.~1, \S~7, Prob.~87, 90] \label{thm:Polya}
Let  $I\subseteq\R$ be an interval.
A sequence of analytic functions $f_1, \ldots, f_n \colon I \to \R$
obeys Descartes' rule of signs if and only if, for every subsequence of indices ${1\le i_1 \le \ldots \le i_k \le n}$,
the Wronskian
\[
W(f_{i_1},\ldots,f_{i_k}) \neq 0
\]
and, for every two subsequences of indices $1\le i_1 \le \ldots \le i_k \le n$ and $1\le j_1 \le \ldots \le j_k \le n$ of the same size,
\[
W(f_{i_1},\ldots,f_{i_k}) \cdot W(f_{j_1},\ldots,f_{j_k}) > 0 .
\]
\end{thm}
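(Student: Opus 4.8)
The plan is to follow the classical argument of P\'olya and Szeg\"o, by induction on $n$. First I recall what the conclusion means: the system $f_1,\dots,f_n$ \emph{obeys Descartes' rule of signs} on $I$ if, for every $a=(a_1,\dots,a_n)\in\R^n\setminus\{0\}$, the number $Z(g)$ of zeros of $g:=\sum_i a_i f_i$ in $I$ counted with multiplicity satisfies $Z(g)\le\sgnvar(a)$ and $Z(g)\equiv\sgnvar(a)\pmod 2$. Note that the one-element subsequences already force each $f_i$ to be non-vanishing on $I$, so forming quotients $g/f_1$ is legitimate.

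For \emph{sufficiency}, assume all Wronskian conditions hold, take $g=\sum_{i=1}^n a_i f_i$ with $a\neq0$, and assume $a_1\neq0$ (otherwise discard $f_1$ and apply the inductive hypothesis, whose subsequence Wronskians form a subfamily of the given ones). Since $f_1$ has no zero, $Z(g)=Z(g/f_1)$, and Rolle's Theorem (Appendix~\ref{app:Rolle}) gives $Z(g/f_1)\le Z\big((g/f_1)'\big)+1$. I would then use
\[
\Big(\tfrac{g}{f_1}\Big)'=\frac{1}{f_1^{2}}\sum_{i=2}^n a_i\,W(f_1,f_i)
\]
to reduce to the system $W(f_1,f_2),\dots,W(f_1,f_n)$. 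The identity $W\big(W(f_1,f_{i_1}),\dots,W(f_1,f_{i_k})\big)=f_1^{\,k-1}W(f_1,f_{i_1},\dots,f_{i_k})$, together with $f_1\neq0$, shows that this reduced system satisfies the same hypotheses (the square factors making the pairwise sign comparisons inherit from those of the original subsequence Wronskians). By induction the reduced system obeys the rule, and then Rolle plus the elementary relation between $\sgnvar(a_1,\dots,a_n)$ and $\sgnvar(a_2,\dots,a_n)$ — combined with tracking the sign of $g$ near the two ends of $I$, which is governed by the first and last nonzero $a_i$ — yields $Z(g)\le\sgnvar(a)$ with the correct parity.

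For \emph{necessity}, I would argue contrapositively. If $W(f_{i_1},\dots,f_{i_k})(t_0)=0$ for some $t_0\in I$, the matrix $\big(f_{i_j}^{(\ell)}(t_0)\big)_{1\le j\le k,\,0\le\ell\le k-1}$ is singular, so some nonzero combination $\sum_j a_j f_{i_j}$ vanishes to order $\ge k$ at $t_0$; since a length-$k$ sign vector has at most $k-1$ sign changes, this contradicts the rule. Hence every subsequence Wronskian is non-vanishing, thus of constant sign on $I$. For the pairwise sign consistency I would connect two equal-size subsequences by a chain whose consecutive members differ in a single index and, for one such step, invoke a Desnanot--Jacobi/Sylvester identity relating the two $k$-Wronskians to a $(k+1)$- and a $(k-1)$-Wronskian of known nonzero sign (or, alternatively, apply the rule once more to a combination with a prescribed high-order zero).

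The main obstacle I anticipate is the precise bookkeeping in the sufficiency step: propagating the parity clause $Z(g)\equiv\sgnvar(a)\pmod 2$ through the induction requires carefully tracking the signs of $g$ at the endpoints of $I$, not merely the inequality on zero counts; and in the necessity step, the combinatorial sign-identity argument for consistency across subsequences is somewhat delicate. Both are carried out in full in P\'olya--Szeg\"o~\cite{PolyaSzegoe,PolyaSzegoeD} (Part~V, Chap.~1), which we may simply cite; the sketch above is the self-contained route.
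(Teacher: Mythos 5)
The paper does not prove this statement at all: Theorem~\ref{thm:Polya} is imported verbatim from P\'olya--Szeg\"o (Part~V, Chap.~1, \S~7) and used as a black box in Example~\ref{exa:Bihan} and Class~\ref{cla:Bihan}. So your ultimate resolution --- cite the reference --- is exactly what the paper does, and in that sense there is nothing to compare. Your accompanying sketch of the classical argument is broadly faithful: the necessity direction (a vanishing $k$-th order Wronskian at $t_0$ produces a nontrivial combination with a zero of order $\ge k$ at $t_0$, exceeding the at most $k-1$ sign changes of a length-$k$ coefficient vector) is complete as stated, and the reduction identities $\left(g/f_1\right)'=f_1^{-2}\sum_{i\ge2}a_i\,W(f_1,f_i)$ and $W\bigl(W(f_1,f_{i_1}),\ldots,W(f_1,f_{i_k})\bigr)=f_1^{\,k-1}W(f_1,f_{i_1},\ldots,f_{i_k})$ are the right tools and are stated with the correct exponent.

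One point deserves a sharper flag than ``bookkeeping'': in the sufficiency induction, Rolle gives $Z(g)\le Z\bigl((g/f_1)'\bigr)+1\le \sgnvar(a_2,\ldots,a_n)+1$, and when the first two nonzero coefficients have the same sign one has $\sgnvar(a_2,\ldots,a_n)=\sgnvar(a)$, so this chain yields only $\sgnvar(a)+1$ and the argument as written does not close. The classical fix is not merely to track endpoint signs but to choose the reduction according to the block structure of the sign pattern of $a$ (divide by a function indexed inside the first constant-sign block, or induct on the number of sign blocks rather than on $n$); this is where the Wronskian positivity of \emph{all} subsequences, not just the full one, is genuinely used. Since the paper cites rather than proves the result, this gap is immaterial to the paper's correctness, but if you intend your sketch as a self-contained route you would need to supply that step, and likewise the chain/Desnanot--Jacobi argument for sign consistency in the necessity direction is only an outline.
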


Indeed,
strict inequalities $1 = q_1 > q_2 > q_3 > q_4 = -1$ imply that the sequence of functions $f_{q_1,q_2},f_{q_2,q_3},f_{q_3,q_4}$,
defining the function~$g$ via~Equation~\eqref{eq:g},
obeys Descartes' rule of signs.
Here, we just consider the case of two functions.
If $q_1>q_2>q_3$, then
\[
W(f_{q_1,q_2},f_{q_2,q_3}) = \frac{(q_1-q_2)(q_1-q_3)(q_2-q_3)}{(1+tq_1)^2(1+tq_2)^2(1+tq_3)^2} 
= f_{q_1,q_2} \, f_{q_1,q_3} \, f_{q_2,q_3}
> 0 .
\]
For the general case, see Lemma~\ref{lem:Descartes} in Class~\ref{cla:Bihan}.
%If consecutive $q_i$ are equal, then there are only two functions in the sequence.
Under the assumption of finitely many solutions,
\[
N(g) \le \sgnvar \, (b_1, \, b_1+b_2, \, b_1+b_2+b_3) ,
\]
by Descartes' rule of signs.
Here, $N(g)$ denotes the number of real zeros counted with their multiplicities of a function $g$ (on a given interval).
If consecutive~$q_i$ are equal, then there are less than three terms in the sequence.
For example,
if $q_1=q_2 \neq q_3$, then $N(g) \le \sgnvar \, (b_1+b_2, \, b_1+b_2+b_3)$.
Note that the rule of signs depends on~$b$ (arising from the exponents),
but also on the order of~$q$ (arising from the coefficients).
Altogether, we have shown the following result.

\begin{pro} \label{pro:exa}
Let $\mA \, x^\mB = 0$ with $\mA,\mB \in \R^{2\times4}$
be two overlapping trinomials involving four monomials in two variables (after Gaussian elimination).

Assume that
(i) $\ker \mA$ is not a direct product, and
(ii) $\ker \binom{\mB}{\ones_4}$ has dimension one.
In particular, let $\bar y^1, \bar y^2 \in (\ker \mA \cap \R^4_\ge)$ be the vertices of a coefficient polytope, %~$\overline P$,
let $q = (\bar y^1 - \bar y^2) \circ (\bar y^1 + \bar y^2)^{-1} \in \R^4$,
and assume that $1 = q_1 \ge q_2 \ge q_3 \ge q_4 = -1$ (after reordering of columns).
Further, let $\ker \binom{\mB}{\ones_4} = \im b$ with $b \in \R^4$. 
%\ker \begin{pmatrix} \mB \\ \ones_{n+2} \end{pmatrix} 

Now, let $n_{\mA,\mB}$ denote the number of distinct positive solutions $x \in \R^2_>$ to ${\mA \, x^\mB = 0}$,
and assume that $n_{\mA,\mB}$ is finite.
Then, 
\[
n_{\mA,\mB} \le 1 + Z(p_2)
\]
and
\[
Z(p_2) \le \sgnvar \, (b_1, \, b_1+b_2, \, b_1+b_2+b_3) ,
\]
where ${p_2 \colon (-1,1) \to \R}$ is a univariate polynomial of degree at most two defined by $q$ and $b$.
%If (consecutive) components of $q$ are equal, 
%the latter bound may be further sharpened.
\end{pro}
\begin{proof}
We just recall the main steps.
By Theorem~\ref{thm:main}, $n_{\mA,\mB} = Z(f)$.
By Rolle's theorem, $Z(f) \le 1 + Z(f')$. 
By the definitions of $g$ and $p_2$, $Z(f')=Z(g)=Z(p_2)$.
By the definitions of numbers of distinct zeros and numbers of zeros counted with their multiplicities, $Z(g) \le N(g)$.
By Descartes' rule of signs for functions, $N(g) \le \sgnvar \, (b_1, \, b_1+b_2, \, b_1+b_2+b_3)$.
\end{proof}

\paragraph{Concrete systems.}

\newcommand{\aaa}{s}
\newcommand{\bbb}{r}

For $\bbb > 1$ and $\aaa>0$, we consider the family of two overlapping trinomials involving four monomials in two variables,
\begin{alignat*}{3}
(\bbb/\aaa) \, x^2 &- \, x \, y^2 &&+1 &&= 0 , \\
(1/\aaa) \, x^2 && - \, y & +1 &&=0 .
\end{alignat*}
In compact form,
$\mA \, x^\mB = 0$ with
\[
\mA = \begin{pmatrix} \bbb/\aaa & -1 & 0 & 1 \\ 1/\aaa & 0 & -1 & 1 \end{pmatrix} 
\quad \text{and} \quad
\mB = \begin{pmatrix} 
2 & 1 & 0 & 0 \\ 0 & 2 & 1 & 0
\end{pmatrix} .
\]

\underline{Coefficients:}
Let $\bar y^1, \bar y^2 \in (\ker \mA \cap \R^4_\ge)$ be the vertices of a coefficient polytope~$\overline P$,
in particular, let
\begin{align*}
\bar y^1 &= (\aaa,\bbb,1,0)^\trans , \\
\bar y^2 &= (0,1,1,1)^\trans ,
\end{align*}
and hence 
\[
q = (\bar y^1- \bar y^2) \circ (\bar y^1 + \bar y^2)^{-1} = \left( 1, q_2, 0, -1 \right)^\trans 
\]
with 
\[ 0 < q_2 = \frac{\bbb-1}{\bbb+1} < 1 . 
\]
%Note that we have scaled $\bar y^1$ and $\bar y^2$ such that they agree in the third component,  $\bar y^1_3 = \bar y^2_3 = 1$,
%and hence $q_3=0$.

\underline{Exponents:}
The dependency subspace is given by
\[
\ker \binom{\mB}{\ones_4} = \im b
\quad \text{with} \quad b = (1,-2,4,-3)^\trans 
\]
and hence
\[
\sgnvar (b_1,b_1+b_2,b_1+b_2+b_3) = \sgnvar (1,-1,3) = 2 .
\]

After scaling, the binomial equation on the coefficient polytope is given by
\[
f(t) = c^*
\]
with
\begin{align*}
f(t) &:= (1_4 + t q)^b = (1+t)^1 (1+ t q_2)^{-2} \, 1^4 \, (1-t)^{-3} , \\
c^* &:= (\bar y^1+\bar y^2)^{-b} = \aaa^{-1} (\bbb+1)^2 \, 2^{-4} \, 1^3 .
\end{align*}

By Rolle's theorem and Descartes' rule of signs for functions, 
\[
Z(f) \le 1 + \sgnvar (b_1,b_1+b_2,b_1+b_2+b_3) = 1 + 2 = 3 .
\]
However, the univariate quadratic polynomial $p_2$ may yield a smaller bound,
\[
Z(f) \le 1 + Z(p_2) .
\]
Indeed, $p_2 = f' / f \cdot D$ with $D = \prod_{i=1}^4 (1 + q_i t)$ amounts to
\[
p_2(t) = 2 \left( 2q_2 \, t^2 + (2q_2+1) t + 2 -q_2 \right)
\]
with
\[
Z(p_2) = \begin{cases} 0 \\ 1 , \\ 2 \end{cases} 
\text{if } q_2 \begin{cases} < \\ = 1/2 + 1/\sqrt 6 . \\ > \end{cases} 
\]
We distinguish the following cases:
\begin{itemize}
\item
If $\bbb<11+4\sqrt 6 \approx 20.798$, then $q_2 < 1/2+1/\sqrt 6 \approx 0.908$, $Z(p_2)=0$, and $Z(f) \le 1$.

Considering the behavior of $f$ at the boundaries of $(-1,1)$,
we conclude that $Z(f)=1$.
\item
If $\bbb>11+4\sqrt 6$, then $q_2> 1/2+1/\sqrt 6$, $Z(p_2)=2$, and $Z(f) \le 3$.

For numerical computations, we set $\bbb=21 > 11+4\sqrt 6$. 
Then, $q_2=10/11$,
and the number of solutions depends on $\aaa$.
\begin{itemize}
\item[]
If $\aaa \le 65.609$ or $\aaa \ge 65.653$, then $Z(f)=1$.
\item[]
If $65.611 \le \aaa \le 65.652$, then $Z(f)=3$.
\end{itemize}
\end{itemize}

\end{exa}

%%%%%%%%%% %%%%%%%%%% %%%%%%%%%% %%%%%%%%%% %%%%%%%%%%

\begin{cla} \label{cla:Bihan}

We consider the class of {\bf $n$ overlapping trinomials involving $(n+2)$ monomials in $n$ variables},
that is,
\[
\mA \, x^\mB = 0
\quad\text{with}\quad
\mA ,\mB \in \R^{n \times (n+2)} .
\]

In Theorem~\ref{thm:exa_general},
we bound the number of positive solutions
by one plus the number of roots of a univariate polynomial of degree at most $n$ on the interval $(-1,1)$.
This number is always less than or equal to the number of sign changes in an optimal Descartes' rule given in Bihan et al.~\cite[Theorem~2.4]{Bihan2021}.

We provide a statement of the result that is easily applicable (with all relevant objects defined within the theorem),
thereby also offering a simple proof for the Descartes' rule using our approach.
In particular, our parametrization of the coefficient polytope gives rise to a special family of univariate functions, 
having simple Wronskians and obeying Descartes' rule of signs.
Theorem~\ref{thm:exa_general} allows for exponents in~$\R$ % (not just in~$\Z$),
with affine dependencies, %
whereas \cite[Theorem~2.4]{Bihan2021} is formulated for exponents in~$\Z$ and ``circuits''.
(In the introduction, Bihan et al.\ state that their results are valid also in the case of real exponents.
They also refer to systems that are not circuits as the easier case.)

Proposition~\ref{pro:exa} in Example~\ref{exa:Bihan} deals with the special case ${n=2}$,
and the proof of Theorem~\ref{thm:exa_general} follows the line of argument in Example~\ref{exa:Bihan}.
Indeed, we first state a fact we have already used in the case $n=2$,
namely,
that sequences of the functions $f_{q,q'}$ defined in Equation~\eqref{eq:fqq}
satisfy the conditions of Theorem~\ref{thm:Polya}.
\begin{lem} \label{lem:Descartes}
For $q,q' \in [-1,1]$, let $f_{q,q'} \colon (-1,1) \to \R_> ,\, t \mapsto \frac{q}{1 + t q}-\frac{q'}{1 + t q'}$.
Further, let $q_1, \ldots, q_{n+2} \in [-1,1]$ and $q_1 > \ldots > q_{n+2}$.
Then the sequence of functions $f_{q_1,q_2}, \ldots, f_{q_{n+1},q_{n+2}}$ obeys Descartes' rule of signs.
In particular,
\[
W(f_{q_1,q_2},\ldots,f_{q_k,q_{k+1}}) = 
\prod_{i=1}^{k} \, (i-1)! \prod_{j=i+1}^{k+1} f_{q_i,q_j}
> 0 .
\]
\end{lem}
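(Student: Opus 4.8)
The plan is to prove the explicit Wronskian formula
\[
W(f_{q_1,q_2},\ldots,f_{q_k,q_{k+1}}) = \prod_{i=1}^{k} (i-1)! \prod_{j=i+1}^{k+1} f_{q_i,q_j}
\]
by induction on $k$, and then read off that the sign conditions of Theorem~\ref{thm:Polya} hold. First I would record the key algebraic identity behind everything: writing $u_q(t) = \frac{q}{1+tq}$, one has $f_{q,q'} = u_q - u_{q'}$ and $u_q^{(r)}(t) = (-1)^r r!\, \frac{q^{r+1}}{(1+tq)^{r+1}} = (-1)^r r!\, q\, u_q(t)^{r+1}/q^{\,?}$—more cleanly, $u_q^{(r)}(t) = (-1)^r r!\, q^{r+1}(1+tq)^{-(r+1)}$. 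Thus each row of the Wronskian matrix, after differentiating, is a signed combination of the functions $v_{q,r}(t) := q^{r+1}(1+tq)^{-(r+1)}$, and the whole determinant becomes a sum of products that can be factored through a Vandermonde-type structure in the variables $q_i$. The cleanest route is probably to substitute $f_{q_i,q_{i+1}} = \frac{q_i - q_{i+1}}{(1+tq_i)(1+tq_{i+1})}$ and track how the determinant factors.

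The main computational step is the induction. Assuming the formula for $k-1$ functions, I would expand $W(f_{q_1,q_2},\ldots,f_{q_k,q_{k+1}})$ along a suitable row or use the standard Wronskian recursion $W(g_1,\ldots,g_k) = W(g_1)^{?}\cdot W\!\big((g_2/g_1)',\ldots,(g_k/g_1)'\big)\cdot g_1^{k}$ — i.e.\ divide out the first function and differentiate. Here a better-adapted tool is the identity that for functions sharing the common denominator structure above, dividing by $(1+tq_1)^{-2}$ and differentiating shifts each $f_{q_1,q_j}$ to something proportional to $f_{q_2,q_j}$ times an explicit factor; iterating this is what produces the telescoping product $\prod_{j=i+1}^{k+1} f_{q_i,q_j}$ and the factorials $\prod (i-1)!$. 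Once the formula is established, the positivity $W > 0$ is immediate: each $f_{q_i,q_j}$ with $q_i > q_j$ satisfies $f_{q_i,q_j}(t) = \frac{q_i-q_j}{(1+tq_i)(1+tq_j)} > 0$ on $(-1,1)$ since $q_i - q_j > 0$ and both denominators are positive there; and all factorials are positive. For the two sign conditions in Theorem~\ref{thm:Polya}: nonvanishing of every $W(f_{i_1},\ldots,f_{i_k})$ over an arbitrary increasing subsequence follows because such a sub-Wronskian is again of the same form (a Wronskian of $f_{q',q''}$'s built from the points $q_{i_1} > q_{i_1+1} \geq \cdots$), and the product-positivity condition follows because every such sub-Wronskian is positive, so any product of two is positive.

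The step I expect to be the main obstacle is the precise bookkeeping in the induction — getting the factorials $\prod_{i=1}^k (i-1)!$ and the exact index ranges $\prod_{j=i+1}^{k+1}$ in the product to come out correctly, since the "divide and differentiate" recursion reshuffles which $f_{q_i,q_j}$ appear and introduces derivative-order-dependent constants. A subtlety to handle carefully is that after dividing out the first function the remaining objects are no longer exactly of the form $f_{q,q'}$ but rather derivatives of ratios; I would need a clean lemma identifying $\big(f_{q_1,q_j}/f_{q_1,q_2}\big)'$ (or the analogous quantity) as a positive explicit multiple of $f_{q_2,q_j} \cdot (\text{elementary factor})$, so that the inductive hypothesis applies verbatim. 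One should also double-check the base case $k=1$, where the formula reads $W(f_{q_1,q_2}) = f_{q_1,q_2} = 0!\cdot f_{q_1,q_2}$, which is consistent. Finally, for the application to Descartes' rule I would invoke Theorem~\ref{thm:Polya} directly, noting that every sub-Wronskian positivity and nonvanishing needed there is exactly an instance of the formula just proved, applied to the corresponding increasing subsequence of the $q_i$.
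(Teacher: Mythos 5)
The paper's own proof of this lemma is literally the two words ``By induction,'' so there is little to compare your route against; what matters is whether your sketch closes. The displayed prefix formula is correct, and your Vandermonde instinct is the right one, but the specific recursion you lean on does not work as stated: with $u_q(t)=\frac{q}{1+tq}$ one computes $\bigl(f_{q_j,q_{j+1}}/f_{q_1,q_2}\bigr)' = \frac{f_{q_j,q_{j+1}}}{f_{q_1,q_2}}\,\bigl(f_{q_1,q_j}+f_{q_2,q_{j+1}}\bigr)$, a \emph{sum} of two $f$'s times a non-constant prefactor, so the divide-and-differentiate step does not return functions of the same form and the induction does not close verbatim. A clean repair is to border the $k\times(k+1)$ matrix $\bigl(u_{q_j}^{(r)}\bigr)_{r=0,\dots,k-1}$ with a top row of ones: since $u_q^{(r)}=(-1)^r r!\,u_q^{r+1}$, the resulting $(k+1)\times(k+1)$ determinant is, up to the row factors $(-1)^r r!$, the Vandermonde determinant in $u_{q_1},\dots,u_{q_{k+1}}$, while the column operations $C_i\mapsto C_i-C_{i+1}$ identify it (up to sign) with $W(f_{q_1,q_2},\dots,f_{q_k,q_{k+1}})$; comparing the two evaluations yields exactly the claimed product with the factorials $\prod_{i=1}^k(i-1)!$, and positivity then follows, as you say, from $1+tq>0$ for $t\in(-1,1)$, $q\in[-1,1]$ together with $q_i>q_j$.

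The genuine gap is in your last step. Theorem~\ref{thm:Polya} requires non-vanishing and sign-consistency of the Wronskians over \emph{all} increasing subsequences of the $n+1$ functions, and those sub-Wronskians are \emph{not} ``again of the same form'': for instance $W(f_{q_1,q_2},f_{q_3,q_4})$ involves two functions with no shared endpoint, is not of the telescoping shape covered by the displayed formula, and in fact equals $(u_{q_1}-u_{q_2})(u_{q_3}-u_{q_4})(u_{q_1}+u_{q_2}-u_{q_3}-u_{q_4})$, whose third factor is a sum rather than an $f_{q,q'}$. It is still positive under $q_1>q_2\ge q_3>q_4$ because $u_q$ is strictly increasing in $q$, but that needs its own argument, and for longer non-consecutive subsequences the sub-Wronskians are Schur-type polynomials in the $u_{q_i}$ whose positivity is not immediate. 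So the assertion that ``every sub-Wronskian needed there is exactly an instance of the formula just proved'' is false as stated, and this part of the verification of the hypotheses of Theorem~\ref{thm:Polya} is missing from your proposal (as, to be fair, it is from the paper's two-word proof).
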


\begin{proof}
By induction.
\end{proof}

Now,
we can present our new upper bound.
\begin{thm} \label{thm:exa_general}
Let $\mA \, x^\mB = 0$ with $\mA,\mB \in \R^{n\times(n+2)}$
be $n$ overlapping trinomials involving $n+2$ monomials in $n$ variables (after Gaussian elimination).

Assume that
(i) $\ker \mA$ is not a direct product, and
(ii) $\ker \binom{\mB}{\ones_{n+2}}$ has dimension one.
In particular, let $\bar y^1, \bar y^2 \in (\ker \mA \cap \R^{n+2}_\ge)$ be the vertices of a coefficient polytope, %~$\overline P$,
let $q = (\bar y^1 - \bar y^2) \circ (\bar y^1 + \bar y^2)^{-1} \in \R^{n+2}$,
and assume that $1 = q_1 \ge \ldots \ge q_{n+2} = -1$
(after reordering of columns).
Further, let $\ker \binom{\mB}{\ones_{n+2}} = \im b$ with $b \in \R^{n+2}$,
and let $s_i = \sum_{j=1}^i b_j$, $i = 1,\ldots, n+1$, be partial sums.

Now, let $n_{\mA,\mB}$ be the number of distinct positive solutions $x \in \R^n_>$ to ${\mA \, x^\mB = 0}$,
and assume that $n_{\mA,\mB}$ is finite.
Then, 
\[
n_{\mA,\mB} \le 1 + Z(p_n)
\]
with
\[
Z(p_n) \le \sgnvar \left( s_i \right)_{i=1}^{n+1} ,
\]
where ${p_n \colon (-1,1) \to \R}$ is a univariate polynomial of degree at most~$n$, defined by
\[
p_n(t) = \sum_{i=1}^{n+1} s_i \, (q_{i+1}-q_i) \prod_{\substack{j \in \{1,\ldots,n+2\} \\ \setminus \{i,i+1\}}} (1+t q_j) .
\]
If (consecutive) components of $q$ are equal, 
the latter bound can be further refined.
Let $I_1,\ldots,I_k \subset \{1,\ldots,n+2\}$ be the corresponding equivalence classes of indices (with an obvious order),
let $\tilde b \in \R^k$ with $\tilde b_i = \sum_{j \in I_i} b_j$,
and let $\tilde s_i = \sum_{j=1}^i \tilde b_j$, $i = 1,\ldots, k-1$, be partial sums.
Then, 
\[
Z(p_n) \le  \sgnvar \left( \tilde s_i \right)_{i=1}^{k-1} .
\]
\end{thm}
\begin{proof}
We follow the line of argument in Example~\ref{exa:Bihan}.

For orientation, we start with a classification of the system.
Clearly, $\mm=n+2$; by assumption (i), $\ell=1$; and by assumption (ii), $\dep=1$.
Hence, $\dim \LL = \mm-\ell-\dep = n$, and the system is generic.

The coefficient polytope can be written as
$
\setP = \{ \yc + t \yp \mid t \in (-1,1) \}
$
with 
$
\yc = \frac{\bar y^1 + \bar y^2}{2} > 0
$
and
$\yp = \frac{\bar y^1 - \bar y^2}{2}
$,
the monomial dependency subspace is given by $D = \im b$,
and hence the binomial equation on the coefficient polytope amounts to $y^b = 1$.
Equivalently, $(y \circ \yc^{-1})^b = \yc^{-b}$ with $y \circ \yc^{-1} = (\yc + t \yp) \circ \yc^{-1} = 1_4 + t q$, that is,
\[
f(t) = c^*
\]
with
\begin{align*}
f(t) & := (1_{n+2} + t q)^b = \prod_{i=1}^{n+2} (1 + t q_i)^{b_i} , \\
c^* & := \yc^{-b} = \prod_{i=1}^{n+2}(\yc_i)^{-b_i} .
\end{align*}
In order to bound the number of solutions to $f(t)=c^*$,
we analyze the function $f \colon (-1,1) \to \R_>$.
Most importantly, the function
\[
g(t) := \frac{f'(t)}{f(t)} = \sum_{i=1}^{n+2} \frac{b_i q_i}{1 + t q_i}
\]
has the same zeros as $f'(t)$.
Using the ``telescoping sum''~\eqref{eq:telescope},
the family of functions introduced in Equation~\eqref{eq:fqq},
and the partial sums defined above,
we find
\begin{align*}
g(t) &= \sum_{i=1}^{n+2} b_i \, \frac{q_i}{1 + t q_i} \\
&= \sum_{i=1}^{n+1} \left( \textstyle \sum_{j=1}^i b_j \right) \left( \frac{q_i}{1 + t q_i} - \frac{q_{i+1}}{1 + t q_{i+1}} \right) \\
&= \sum_{i=1}^{n+1} \left( \textstyle \sum_{j=1}^i b_j \right) \left( q_{i+1} - q_i \right) \frac{1}{(1 + t q_i)(1 + t q_{i+1})} \\
&= \sum_{i=1}^{n+1} s_i \, f_{q_i,q_{i+1}}(t) .
\end{align*}
On the one hand, with $D(t):=(1_{n+2} + t q)^{1_{n+2}}=\prod_{i=1}^{n+2} (1+t q_i)$, the function
\[
p_n(t) := g(t) \, D(t) = \sum_{i=1}^{n+1} s_i \, (q_{i+1}-q_i) \prod_{\substack{j \in \{1,\ldots,n+2\} \\ \setminus \{i,i+1\}}} (1+t q_j)
\]
is a univariate polynomial of degree at most $n$
and has the same zeros as $g$.
By Rolle's theorem,
$
Z(f) \le 1 + Z(f')
$,
and $Z(f')=Z(g)=Z(p_n)$ implies
\[
Z(f) \le 1 + Z(p_n) .
\]

On the other hand,
by Lemma~\ref{lem:Descartes},
strict inequalities
$q_1 > \ldots > q_{n+2}$
imply that the sequence of functions $f_{q_1,q_2}, \ldots, f_{q_{n+1},q_{n+2}}$ obeys Descartes' rule of signs,
cf.~Theorem~\ref{thm:Polya}.
Hence, under the assumption of finitely many solutions,
the number of zeros of $g$ counted with their multiplicities is bounded by the number of sign changes in the sequence
$
(s_i)_{i=1}^{n+1}
$.
Now, $Z(g) = Z(p_n)$ implies
\[
Z(p_n) \le \sgnvar \left( s_i \right)_{i=1}^{n+1} .
\]

If (consecutive) components of $q$ are equal, then
\[
g(t) = \sum_{i=1}^{n+2} b_i \, \frac{q_i}{1 + t q_i} 
= \sum_{i=1}^{k} \sum_{j \in I_i} b_j \, \frac{q_j}{1 + t q_j}
= \sum_{i=1}^{k} \tilde b_i \, \frac{q_{r(i)}}{1 + t q_{r(i)}} ,
\]
where $r(i) \in I_i$ is a representative of the index set $I_i$.
We can apply the same argument as above
and obtain $Z(p_n) \le \sgnvar \left( \tilde s_i \right)_{i=1}^{k-1}$.
Clearly, $\sgnvar \left( \tilde s_i \right)_{i=1}^{k-1} \le \sgnvar \left( s_i \right)_{i=1}^{n+1}$.
\end{proof}

%{\em Remark.}
%In Theorem~\ref{thm:exa_general},
%we obtain $n_{\mA,\mB} \le k-1$,
%where $k$ denotes the number of equivalence classes of column indices 
%determined by equal components of~$q$.
%After \cite[Theorem~2.4]{Bihan2021}, Bihan et al.\ state %(the weaker result) 
%$n_{\mA,\mB} \le k$, which is a typo.

{\em Remark.}
The number $Z(p_n)$ of distinct roots of $p_n$ in the interval $(-1,1)$ can be computed by Sturm's theorem or real root isolation algorithms,  see e.g.~\cite{Basu2006}.
\end{cla}

%%%%%%%%%% %%%%%%%%%% %%%%%%%%%% %%%%%%%%%% %%%%%%%%%%

\subsection{Dependency two and higher}

\begin{cla} \label{cla:tri-t}
We consider {\bf one trinomial and one $t$-nomial {\rm (with $t\ge3$)} in two variables}, that is,
\begin{align*}
\bar c_1 \, x^{\bar b^1} + \bar c_2 \, x^{\bar b^2} - 1 &= 0 , \\ 
\pm c_1 \, x^{b^1} \pm \ldots \pm c_{t-1} \, x^{b^{t-1}} - 1 &= 0 ,
\end{align*}
with $x \in \R^2_>$,
nonzero $\bar b^1, \bar b^2, b^1, \ldots, b^{t-1} \in \R^2$,
$\bar b^1 \neq \bar b^2$, and $b^i \neq b^j$ for $i \neq j$
as well as
$\bar c_1,\bar c_2, c_1,\ldots, c_{t-1}>0$.
In compact form,
$\mA \, (\cc \circ x^\mB)$ with
\begin{align*}
\mA &= \begin{pmatrix} 1 & 1 & -1 & 0 & \ldots & 0 & 0 \\ 0 & 0 & 0 & \pm 1 & \ldots & \pm 1 & -1 \end{pmatrix} , \\[2ex]
\mB &= \begin{pmatrix} \bar b^1 & \bar b^2 & 0 & \phantom{x} b^1 & \ldots & b^{t-1} & 0 \end{pmatrix} , \\[2ex]
\cc &= \begin{pmatrix} \bar c_1 & \bar c_2 & 1 & \phantom{x} c_1 & \ldots & c_{t-1} & 1 \end{pmatrix}^\trans .
\end{align*}
Clearly, $\mm=3+t$, $n=2$, and $\ell=2$. 
% By our dimensional considerations, $\mm > n + \ell$ implies $\dep=\mm-\ell-n=1$.
We assume the generic case, where
\[
M = \mB \, \begin{pmatrix} \IL_3 & 0 \\ 0 & \IL_t \end{pmatrix} = \begin{pmatrix} 
\bar b^1& \bar b^2 & b^1 & \ldots & b^{t-1}
\end{pmatrix}
\]
has full (row) rank.
Hence, $\dim \LL = n$
and $\dep = \mm - \ell - n = t-1 \ge 2$.

If $\bar b^1, \bar b^2$ are linearly dependent, then, after solving the first equation, 
the second equation becomes a $t$-nomial in one variable, having at most $t-1$ positive solutions.
Hence, we assume that $\bar b^1, \bar b^2$ are linearly independent,
and, after a change of variables, the exponent matrix takes the form
\[
%\mB = \begin{pmatrix} 
%\id & 0 & b^1 & \ldots & b^{t-1} & 0
%\end{pmatrix} .
\mB = \begin{pmatrix} 
1 & 0 & 0 & \alpha_1 & \ldots & \alpha_{t-1} & 0 \\
0 & 1 & 0 & \beta_1 & \ldots & \beta_{t-1} & 0 \\
\end{pmatrix} 
\]
with $(\alpha_i, \beta_i) \neq (0,0)$ and $(\alpha_i, \beta_i) \neq (\alpha_j, \beta_j)$ for $i \neq j$.
That is, we actually consider 
\begin{align*} 
\bar c_1 \, x_1 + \bar c_2 \, x_2 - 1 &= 0 , \\ 
\pm c_1 \, x_1^{\alpha_1} x_2^{\beta_1} \pm \ldots \pm c_{t-1} \, x_1^{\alpha_{t-1}} x_2^{\beta_{t-1}} - 1 &= 0 . 
\end{align*}
%In fact, we rule out the degenerate cases $(\alpha_i,\beta_i)=(0,0)$ and $(\alpha_i,\beta_i)=(\alpha_j,\beta_j)$ for $i\neq j$
%which reduce $t$.

We proceed in two steps.
(i) We obtain a global upper bound in terms of $t$.
To this end,
we solve the first equation, and the second equation becomes a sum of $t$~sign-characteristic functions in one variable.
(ii) For the simplest case $t=3$ (two trinomials),
we refine the upper bound in terms of the exponents (of the second equation),
see Example~\ref{exa:tritri} below.

Considered separately, the first equation has monomial dependency $\dep=3-1-2=0$.
Our approach yields the explicit parametrization of positive solutions
\[
x_1 = \frac{\lambda}{\bar c_1} \quad \text{and} \quad x_2 = \frac{1-\lambda}{\bar c_2} \quad \text{with} \quad \lambda \in (0,1) .
\]
Inserting the solutions of the first equation into the second equation yields $f(\lambda)=0$ for a sum of sign-characteristic functions
\begin{equation} \label{eq:sum_sc}
\begin{aligned} 
f(\lambda) &=
\pm \gamma_1 \, \lambda^{\alpha_1} (1-\lambda)^{\beta_1} \pm \ldots \pm \gamma_{t-1} \, \lambda^{\alpha_{t-1}} (1-\lambda)^{\beta_{t-1}} - 1 \\
&\phantom{}= \pm \gamma_1 \, s_{\alpha_1,\beta_1}(\lambda) \pm \ldots \pm \gamma_{t-1} \, s_{\alpha_{t-1},\beta_{t-1}}(\lambda) - 1 ,
\end{aligned}
\end{equation}
where $\gamma_i = \frac{c_i}{\bar c_1^{\alpha_i} \bar c_2^{\beta_i}} > 0$.

Theorem~\ref{thm:tnomial} below improves previous upper bounds
for the number of (finitely many) positive solutions,
cf.~\cite[Theorem~1(a)]{Li2003} and \cite[Corollary~16]{Koiran2015a}.
%in particular, $2^t-2$ by Li et al~\cite[Theorem~1(a)]{Li2003} and $\frac{2}{3}t^3+5t$ by Koiran et al~\cite[Corollary~16]{Koiran2015a}.
It is based on the following result by Koiran et al.~(2015).

\begin{thm}[\cite{Koiran2015a}, Theorem~9] \label{thm:Koiran}
Let $f_1, \ldots, f_n$ be analytic, linearly independent functions on an interval $I$,
and let $W_i = W(f_1, \ldots, f_i)$ denote the Wronskian of the first $i$ functions.
Then,
\[
Z(f_1+...+f_n) \le n-1+Z(W_n)+Z(W_{n-1})+2 \sum^{n-2}_{i=1} Z(W_i) ,
\]
where $Z(g)$ denotes the number of distinct real zeros of a function $g$ on $I$.
\end{thm}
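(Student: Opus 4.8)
The plan is to follow the Wronskian--Rolle strategy of Koiran et al.: set $g=f_1+\cdots+f_n$ and introduce the chain of auxiliary functions
\[
g_k \;=\; W(f_1,\ldots,f_k,g)\;=\;W\!\Big(f_1,\ldots,f_k,\ \textstyle\sum_{i>k}f_i\Big),\qquad k=0,1,\ldots,n-1,
\]
where the second equality holds because subtracting the columns $f_1,\ldots,f_k$ from the last column of the Wronskian matrix does not change the determinant. With the conventions $W_0=1$ and $W(g)=g$ one gets $g_0=g$ and $g_{n-1}=W(f_1,\ldots,f_{n-1},f_n)=W_n$. Each $g_k$ is analytic on $I$, and each $W_{k+1}$ is not identically zero because $f_1,\ldots,f_{k+1}$ are linearly independent. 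I would then bound $Z(g_0)$ by a recursion that decreases $k$ by one, paying a controlled number of Wronskian zeros at each step.

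The first ingredient is an algebraic identity, a form of the Sylvester/Desnanot--Jacobi identity for Wronskians with a common block $f_1,\ldots,f_k$:
\[
W\big(W(f_1,\ldots,f_k,f_{k+1}),\ W(f_1,\ldots,f_k,g)\big)\;=\;W(f_1,\ldots,f_k)\cdot W(f_1,\ldots,f_k,f_{k+1},g),
\]
that is, $W(W_{k+1},g_k)=W_k\,g_{k+1}$. Equivalently, wherever $W_{k+1}\neq 0$,
\[
\left(\frac{g_k}{W_{k+1}}\right)'\;=\;\frac{W_{k+1}\,g_k'-W_{k+1}'\,g_k}{W_{k+1}^2}\;=\;\frac{W_k\,g_{k+1}}{W_{k+1}^2}.
\]
This I would obtain either by a direct column manipulation of the $(k+2)\times(k+2)$ Wronskian determinant or by quoting the standard identity.

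The second, and central, step is the Rolle-type recursion
\[
Z(g_k)\;\le\;1+Z(W_k)+Z(W_{k+1})+Z(g_{k+1}),\qquad k=0,\ldots,n-2 .
\]
For two consecutive zeros $a<b$ of $g_k$ in $I$: if $W_{k+1}$ vanishes on $(a,b)$ we are done; otherwise $g_k/W_{k+1}$ is analytic on $[a,b]$ and vanishes at both ends, so by Rolle's Theorem its derivative $W_k g_{k+1}/W_{k+1}^2$ vanishes in $(a,b)$, hence $W_k$ or $g_{k+1}$ has a zero there. Thus every gap between consecutive zeros of $g_k$ is charged a distinct zero of $W_k$, $W_{k+1}$ or $g_{k+1}$. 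The step I expect to be the main obstacle is the bookkeeping for degenerate configurations: a zero of $W_{k+1}$ located exactly at a zero of $g_k$ sits on the boundary of two gaps rather than in the interior of one. Here one uses the identity once more — evaluating $W_{k+1}g_k'-W_{k+1}'g_k=W_k g_{k+1}$ at such a point shows it is also a zero of $W_k$ or $g_{k+1}$ — and reallocates the charge; alternatively one works with zeros counted with multiplicity, or perturbs $f_1,\ldots,f_n$ into general position and argues by semicontinuity. This case analysis is routine but must be carried out so that each Wronskian zero is charged to at most one gap, since that is precisely what makes the constants sharp rather than loose.

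Finally I would telescope the recursion from $k=0$ down to $k=n-2$, using $g_0=g$, $g_{n-1}=W_n$ and $Z(W_0)=0$:
\[
Z(g)\;\le\;\sum_{k=0}^{n-2}\big(1+Z(W_k)+Z(W_{k+1})\big)+Z(W_n)\;=\;(n-1)+\sum_{k=0}^{n-2}Z(W_k)+\sum_{k=1}^{n-1}Z(W_k)+Z(W_n).
\]
Collecting terms, $W_0$ contributes $0$, each of $W_{n-1}$ and $W_n$ contributes once, and each of $W_1,\ldots,W_{n-2}$ contributes twice, which is exactly the asserted inequality $Z(f_1+\cdots+f_n)\le n-1+Z(W_n)+Z(W_{n-1})+2\sum_{i=1}^{n-2}Z(W_i)$. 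As a consistency check, when all $W_i$ have no zeros on $I$ (the disconjugate case) the bound reduces to $Z(g)\le n-1$, which is the classical estimate for a nonzero solution of an $n$-th order disconjugate linear differential equation.
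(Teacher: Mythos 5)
The paper imports this statement verbatim from Koiran, Portier and Tavenas and gives no proof of its own, so there is no internal argument to compare against; your reconstruction is correct and is essentially the original source's proof: the chain $g_k=W(f_1,\ldots,f_k,g)$, the Frobenius-type identity $W(W_{k+1},g_k)=W_k\,g_{k+1}$, the Rolle recursion $Z(g_k)\le 1+Z(W_k)+Z(W_{k+1})+Z(g_{k+1})$, and the telescoping that yields exactly the coefficients $1,1,2,\ldots,2$. The degenerate case you flag does close cleanly: letting $P$ be the zero set of $W_{k+1}$ and $q$ the number of common zeros of $g_k$ and $W_{k+1}$, the remaining $Z(g_k)-q$ zeros of $g_k$ are zeros of $h=g_k/W_{k+1}$ on the at most $|P|+1$ components of $I\setminus P$, Rolle on each component gives $Z(h)\le Z(h')+|P|+1$, and since each of the $q$ common zeros is (by the identity) a zero of $W_kg_{k+1}$ lying in $P$ and hence unavailable to $h'$, one gets $Z(h')\le Z(W_k)+Z(g_{k+1})-q$; adding back the $q$ excluded zeros gives the recursion with no double charging. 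Linear independence plus analyticity guarantees $W_{k+1}\not\equiv 0$ and $g_k\not\equiv 0$ throughout, so all the quantities involved are well defined.
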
 

As a main result, we obtain an upper bound for the general class.
\begin{thm} \label{thm:tnomial}
For a trinomial and a $t$-nomial (with $t\ge3$) in two variables, the number of positive solutions is infinite or bounded by
\[
\frac{1}{3} t^3 - t^2 + \frac{8}{3} t - 2 .
\]
\end{thm}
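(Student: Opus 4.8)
The plan is to use the reduction already prepared for Class~\ref{cla:tri-t}, to apply Descartes' rule of signs for functions in the Wronskian form of Theorem~\ref{thm:Koiran}, and to control the Wronskians via the structure theory of sign-characteristic functions from Appendix~\ref{app:sc}. First I would dispose of the degenerate case: if $\bar b^1,\bar b^2$ are linearly dependent, then, after solving the first equation, the second becomes a univariate $t$-nomial with at most $t-1$ positive solutions, and $t-1$ lies below the claimed bound for $t\ge3$; and if the solution set is infinite there is nothing to prove. So assume $\bar b^1,\bar b^2$ are linearly independent and, after the monomial change of variables described in the class, the system is the reduced one in $x_1,x_2$, whose positive solutions correspond bijectively --- via $x_1=\lambda/\bar c_1$, $x_2=(1-\lambda)/\bar c_2$ --- to the zeros in $(0,1)$ of the function $f$ in \eqref{eq:sum_sc}, a sum of $t$ functions $f_1,\ldots,f_t$ consisting of the constant $-1$ together with the $t-1$ signed sign-characteristic functions $\pm\gamma_i\,s_{\alpha_i,\beta_i}$. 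These are real-analytic on $(0,1)$ and linearly independent (the pairs $(\alpha_i,\beta_i)$ are distinct and nonzero, and $(0,0)$ is not among them); since all coefficients are nonzero, $f\not\equiv 0$, so $f$ has finitely many zeros in $(0,1)$ and we are in the ``bounded'' alternative.

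Next I would apply Theorem~\ref{thm:Koiran} with $n=t$ to $f=f_1+\cdots+f_t$, ordering the summands so that $f_1=-1$. Writing $W_i=W(f_1,\ldots,f_i)$ and letting $Z$ count distinct zeros in $(0,1)$, this gives
\[
Z(f)\le (t-1)+Z(W_t)+Z(W_{t-1})+2\sum_{i=1}^{t-2}Z(W_i).
\]
The crux is the estimate $Z(W_i)\le\binom{i}{2}$ for every $i$. For $i=1$ it is trivial ($W_1=-1$). For $i\ge2$, expanding the Wronskian determinant of $(-1,f_2,\ldots,f_i)$ along its first column and discarding the nonzero scalars $\pm\gamma_j$ gives, up to a nonzero constant, $W_i=W\bigl(s'_{\alpha_1,\beta_1},\ldots,s'_{\alpha_{i-1},\beta_{i-1}}\bigr)$, a Wronskian of $i-1$ functions. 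Since $s'_{\alpha,\beta}(\lambda)=\lambda^{\alpha-1}(1-\lambda)^{\beta-1}\bigl(\alpha-(\alpha+\beta)\lambda\bigr)$ is a sign-characteristic function times a linear polynomial, the estimates of Appendix~\ref{app:sc} apply: the Wronskian of $m$ functions of the form $\lambda^{a}(1-\lambda)^{b}p(\lambda)$ with $\deg p\le1$ is again of the form $\lambda^{a'}(1-\lambda)^{b'}R(\lambda)$ with $\deg R\le\binom{m}{2}+m=\binom{m+1}{2}$ (the cancellations being of Vandermonde type), hence has at most $\binom{m+1}{2}$ zeros in $(0,1)$. With $m=i-1$ this yields $Z(W_i)\le\binom{i}{2}$.

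Substituting $Z(W_i)\le\binom{i}{2}$ and using $\sum_{i=1}^{t-2}\binom{i}{2}=\binom{t-1}{3}$, I would simplify
\[
(t-1)+\binom{t}{2}+\binom{t-1}{2}+2\binom{t-1}{3}=t(t-1)+\frac{(t-1)(t-2)(t-3)}{3}=\frac{1}{3}t^3-t^2+\frac{8}{3}t-2,
\]
which is exactly the asserted bound (and, reassuringly, it is not tight at $t=3$, where the separate refinement of Example~\ref{exa:tritri} improves it to $5$).

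I expect the main obstacle to be the Wronskian degree bound $Z(W_i)\le\binom{i}{2}$: a naive column-by-column degree count on the Wronskian determinant overshoots (it only yields roughly $3\binom{i}{2}$ zeros), so one genuinely needs the determinantal, Vandermonde-type cancellations recorded in the sign-characteristic function lemmas of Appendix~\ref{app:sc}. The remaining ingredients --- the reduction to $f$, the linear independence and analyticity needed to invoke Theorem~\ref{thm:Koiran}, and the closing arithmetic --- are routine. One minor point: Theorem~\ref{thm:Koiran} bounds the number of \emph{distinct} zeros, so a standard perturbation argument would be needed to upgrade the bound to one counted with multiplicity, consistent with the phrasing used for $t=3$ in Example~\ref{exa:tritri}.
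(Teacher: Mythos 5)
Your proposal is correct and follows essentially the same route as the paper: reduce to the zero count of the sum of sign-characteristic functions $f$ on $(0,1)$, apply Theorem~\ref{thm:Koiran}, establish $Z(W_i)\le\binom{i}{2}$ from the Wronskian structure of sign-characteristic functions, and carry out the same closing arithmetic. The only (harmless) deviation is your placement of the constant \emph{first}, which forces the detour through $W(1,g_2,\ldots,g_i)=W(g_2',\ldots,g_i')$ and an extension of Proposition~\ref{pro:Wronskian} to products $s_{a,b}\cdot p$ with $\deg p\le 1$ (which does follow by multilinearity from $s'_{\alpha,\beta}=\alpha s_{\alpha-1,\beta}-\beta s_{\alpha,\beta-1}$); the paper instead appends $1=s_{0,0}$ as the \emph{last} function, so that Proposition~\ref{pro:Wronskian} applies verbatim to every $W_i$, $1\le i\le t$.
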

\begin{proof}
We consider the zeros of the function $f(\lambda)$ in Equation~\eqref{eq:sum_sc},
in particular, the sequence of sign-characteristic functions 
\[
s_{\alpha_1,\beta_1}, \; \ldots, \; s_{\alpha_{t-1},\beta_{t-1}} , \; 1 ,
\]
and we introduce corresponding Wronskians,
\[
W_i = W(s_{\alpha_1,\beta_1}, \ldots, s_{\alpha_i,\beta_i})
\]
for $1 \le i \le t$ (where $s_{\alpha_t,\beta_t} = s_{0,0} =1$).
%
%Further, let $Z(g)$ denote the zeros of a function $g\colon(0,1)\to\R$.
%
By Theorem~\ref{thm:Koiran},
\[
Z(f) \le t-1+Z(W_t)+Z(W_{t-1})+2 \sum^{t-2}_{i=1} Z(W_t) ,
\]
by Proposition~\ref{pro:Wronskian} in Appendix~\ref{app:sc},
\[
Z(W_i) \le \binom{i}{2} ,
\]
and hence
\[
Z(f) \le t-1+ \binom{t}{2} + \binom{t-1}{2} + 2\sum^{t-2}_{i=1}\binom{i}{2} .
\]
By induction,
\[
\sum_{i=1}^k \binom{i}{2} = \underbrace{0+1+3+6+\ldots}_{k \, \text{terms}} =  \frac{(k-1)k(k+1)}{6} 
\]
and hence
\begin{align*}
Z(f) &\le
t-1+\frac{t(t-1)}{2}+\frac{(t-1)(t-2)}{2}+2\,\frac{(t-3)(t-2)(t-1)}{6} \\
&= t(t-1) + \frac{(t-3)(t-2)(t-1)}{3} \\
&= \frac{1}{3} t^3 - t^2 + \frac{8}{3} t - 2 .
\end{align*} 
\end{proof}

We compare the upper bounds on the number of (finitely many) positive solutions
to one trinomial and one $t$-nomial in two variables, given in Theorem~\ref{thm:tnomial} (of this work), \cite[Theorem~1(a)]{Li2003}, and \cite[Corollary~16]{Koiran2015a}.
\renewcommand{\arraystretch}{1.2}
\[
\begin{array}{|c|c|c|c|} \hline
& \text{this work} & \cite{Li2003} & \cite{Koiran2015a} \\ \hline
t & \frac{1}{3} t^3 - t^2 + \frac{8}{3} t - 2 & 2^t-2 & \frac{2}{3}t^3+5t \\ \hline \hline
3 & 6 & 6 & 33 \\
4 & 14 & 14 & 62\frac{2}{3} \\
5 & 28 & 30 & 108\frac{1}{3} \\
6 & 50 & 62 & 174 \\
\vdots & & & \\
10 & 258 & 1022 &716\frac{2}{3} \\
\vdots & & & \\ \hline
\end{array}
\]
\end{cla}

Very recently, our upper bounds were further improved by El-Hilany and Tavenas~\cite{ElHilanyTavenas2024}, 
using our factorization of the Wronskians $W_i$, see Proposition~\ref{pro:Wronskian} in Appendix~\ref{app:sc},
along with a nontrivial result based on {\em dessins d’enfant} to bound the number $Z(W_t)+Z(W_{t-1})$,
see Theorem~\ref{thm:Koiran}. % that is, \cite[Theorem~9]{Koiran2015a}.

%\begin{rem}
{\em Remark.}
For the number of real intersection points of a line with a plane curve defined by a (standard) polynomial with at most $t$ monomials, 
there is linear upper bound. More specifically, improving a result by Avenda\~{n}o~\cite{Avendano2009}, Bihan and El Hilany~\cite{BihanElHilany2017} prove that there are at most $6t-7$ real solutions and show that the upper bound is sharp for $t=3$. However, the proof works only for polynomials and not for generalized polynomials (with real exponents). 
%\end{rem}

%%%%%%%%%% %%%%%%%%%% %%%%%%%%%% %%%%%%%%%% %%%%%%%%%%

\begin{exa} \label{exa:tritri}
We consider {\bf two trinomials in two variables},
that is, the simplest case $t=3$ in Class~\ref{cla:tri-t}, % and $t=4$ (one trinomial and one quartic), 
and we refine the upper bound.
This problem includes ``Haas systems'' which 
served as counterexamples to Kouchnirenko’s Conjecture~\cite{Khovanskij1980,Sturmfels1998,Haas2002,Dickenstein2007}.

By the standardization process used in Class~\ref{cla:tri-t}, we obtain
\begin{align*} 
\bar c_1 \, x_1 + \bar c_2 \, x_2 - 1 &= 0 , \\ 
c_1 \, x_1^{\alpha_1} x_2^{\beta_1} + c_2 \, x_1^{\alpha_2} x_2^{\beta_2} - 1 &= 0 \nonumber
\end{align*}
and, after solving the first equation, we consider
\begin{equation}  \label{eq:sum_tritri}
f(\lambda) = \gamma_1 \, s_{\alpha_1,\beta_1}(\lambda) + \gamma_2 \, s_{\alpha_2,\beta_2}(\lambda) - 1 = 0 
\end{equation}
for $\lambda \in (0,1)$.
Recall $(\alpha_i, \beta_i) \neq (0,0)$ and $(\alpha_1, \beta_1) \neq (\alpha_2, \beta_2)$. 

The following result was shown in Li et al.~(2003).
We provide a much simpler proof.

\begin{thm}[cf.~\cite{Li2003}, Theorem 1] \label{thm:tritri}
For two trinomials in two variables, the number of positive solutions is infinite or bounded by five.
\end{thm}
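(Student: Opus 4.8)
{\bf The reduction.} As recalled in the paragraphs preceding the statement (and carried out in Class~\ref{cla:tri-t} for $t=3$), after the standardization and after solving the first equation the positive solutions of the two trinomials are in bijection with the zeros in $(0,1)$ of
\[
f(\lambda) = \gamma_1\, s_{\alpha_1,\beta_1}(\lambda) + \gamma_2\, s_{\alpha_2,\beta_2}(\lambda) - 1 ,
\qquad \gamma_1,\gamma_2>0 ,
\]
with $(\alpha_i,\beta_i)\neq(0,0)$ and $(\alpha_1,\beta_1)\neq(\alpha_2,\beta_2)$. So it suffices to show that $Z(f)\le 5$ on $(0,1)$ whenever $Z(f)$ is finite. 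The plan is first to obtain the weaker bound $6$ from the general $t$-nomial argument, and then to save one zero by a short case analysis driven by the three exponent pairs $(\alpha_1,\beta_1),(\alpha_2,\beta_2),(0,0)$.

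{\bf Step 1 (the bound $6$).} This is Theorem~\ref{thm:tnomial} for $t=3$: applying Koiran's Theorem~\ref{thm:Koiran} to the analytic, linearly independent functions $s_{\alpha_1,\beta_1},s_{\alpha_2,\beta_2},1$, and using $Z(W_1)=0$ (a sign-characteristic function is positive on $(0,1)$), $Z(W_2)\le\binom22=1$, and $Z(W_3)\le\binom32=3$ (Proposition~\ref{pro:Wronskian}), one gets $Z(f)\le 2+Z(W_3)+Z(W_2)+2\,Z(W_1)\le 6$. {\bf Step 2 (good configurations).} The key remark is that $Z(W_1)=0$ holds for \emph{every} ordering of the three functions, and that $W_2$ is always a positive function times a single linear polynomial, whose real root lies in $(0,1)$ precisely when the two coordinates of the corresponding difference of exponent pairs have the same nonzero sign. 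Hence, unless all three difference vectors
\[
(\alpha_2-\alpha_1,\ \beta_2-\beta_1), \qquad (\alpha_1,\ \beta_1), \qquad (\alpha_2,\ \beta_2)
\]
lie in the open first or third quadrant, one may order the three functions so that $Z(W_2)=0$, and Theorem~\ref{thm:Koiran} then gives $Z(f)\le 2+Z(W_3)\le 5$.

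{\bf Step 3 (the ``bad'' configuration).} It remains to treat the case in which all three difference vectors lie in the open first/third quadrant. Since the edge vectors of the Newton triangle sum to zero and $(\alpha_2-\alpha_1,\beta_2-\beta_1)=(\alpha_2,\beta_2)-(\alpha_1,\beta_1)$, after exchanging the two monomials (and possibly reflecting $\lambda\mapsto 1-\lambda$) one may assume $\alpha_1<\alpha_2$ and $\beta_1<\beta_2$, with $(\alpha_i,\beta_i)$ having equal-sign coordinates. Then $g:=f+1=\gamma_1 s_{\alpha_1,\beta_1}+\gamma_2 s_{\alpha_2,\beta_2}$ is positive on $(0,1)$ and its one-sided limits at $0$ and at $1$ are \emph{both} $0$ (if $\alpha_i,\beta_i>0$) or \emph{both} $+\infty$ (if $\alpha_i,\beta_i<0$); hence the number of level sets $g=1$, i.e.\ $Z(f)$, is \emph{even}, so it is enough to improve the bound $6$ to $\le 5$, equivalently (by parity) to $\le 4$. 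For this I would argue on $f'=g'$: on $(0,1)$, after dividing by the positive monomial $\lambda^{\min(\alpha_1,\alpha_2)-1}(1-\lambda)^{\min(\beta_1,\beta_2)-1}$, the zeros of $g'$ are those of $\gamma_1 P_1+\gamma_2 s_{a,b}P_2$ with $a=\alpha_2-\alpha_1>0$, $b=\beta_2-\beta_1>0$ and $P_i(\lambda)=\alpha_i-(\alpha_i+\beta_i)\lambda$ linear, and one combines the Wronskian bound for this two-term sum with the parity already established for $f$ to exclude the sixth zero. Equivalently, and this is the geometric content in the first sub-case: the zeros of $f$ are the intersections of the curve $\lambda\mapsto\bigl(\gamma_1 s_{\alpha_1,\beta_1}(\lambda),\gamma_2 s_{\alpha_2,\beta_2}(\lambda)\bigr)$ with the line $X+Y=1$; this curve is a loop based at the origin, and the extra structure (its inflection-type points are the zeros of $W(s_{\alpha_1,\beta_1}',s_{\alpha_2,\beta_2}')$, again a positive function times a cubic) together with the evenness of the count rules out six.

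{\bf Main obstacle.} Steps~1 and~2 are routine; the work is Step~3. The delicate point is that here the naive Koiran/Wronskian bound is genuinely $6$ --- in the bad configuration the cubic
\[
R(\lambda)=P_1(\lambda)P_2(\lambda)\bigl(P_2(\lambda)-P_1(\lambda)\bigr)+(\alpha_2\beta_1-\alpha_1\beta_2)\,\lambda(1-\lambda)
\]
governing $W_3$ can have three zeros in $(0,1)$ while $W_2$ still has one --- so one must use the parity of $Z(f)$ (plus, via Rolle's Theorem of Appendix~\ref{app:Rolle}, its consequences for $f',f'',\dots$) to get from $6$ down to $5$; this is precisely where the streamlining over the original argument of Li et al lies. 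The degenerate sub-case $\alpha_1\beta_2=\alpha_2\beta_1$ is easy and should be disposed of first: there $s_{\alpha_2,\beta_2}$ is a power of $s_{\alpha_1,\beta_1}$, so $f$ becomes a univariate trinomial in $w=s_{\alpha_1,\beta_1}(\lambda)$ with at most one positive root $w_0$ (it has a single sign change), whence $Z(f)\le 2$ by Example~\ref{exa:tri}. Finally, the bound $5$ is attained in a configuration of Step~2, which yields the promised example with five positive solutions, simpler than the smallest Haas system.
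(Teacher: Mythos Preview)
Your Steps~1 and~2 are correct and close to the paper's Case~1: you observe that for any ordering of the three functions $1,s_{\alpha_1,\beta_1},s_{\alpha_2,\beta_2}$, the first Wronskian $W_1$ is a positive function and the second Wronskian $W_2$ is a positive function times a single linear factor, so whenever one of the three exponent-difference vectors $(\alpha_1,\beta_1)$, $(\alpha_2,\beta_2)$, $(\alpha_2-\alpha_1,\beta_2-\beta_1)$ fails to lie in the open first/third quadrant you can order so that $Z(W_2)=0$ and conclude $Z(f)\le 2+Z(W_3)\le 5$ via Theorem~\ref{thm:Koiran}. This is a clean reduction.

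The gap is in Step~3. Your ``bad'' configuration comprises two genuinely different situations, namely the paper's Case~2 (both $(\alpha_i,\beta_i)$ in the same open quadrant) and Case~3 (one in the first, one in the third), and the sketch you give does not close either. Concretely: after dividing $g'$ by a positive monomial you apply Koiran's theorem to the two-term sum $\gamma_1P_1+\gamma_2 s_{a,b}P_2$; but $W(P_1,s_{a,b}P_2)$ equals $s_{a-1,b-1}$ times a cubic, so this yields only $Z(f')\le 1+3+Z(P_1)=5$, hence $Z(f)\le 6$, and the parity of $Z(f)$ does not help (since $Z(f')$ is odd and $f(0+)\cdot f(1-)>0$, the refined Rolle inequality of Corollary~\ref{cor:Rolle+} does not apply). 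The geometric loop picture is suggestive but not a proof either: three inflection-type points cut the loop into at most four convex arcs, each of which can meet a line twice. What the paper does instead in Case~2 is the missing idea: it first observes that $f'=\gamma_1 s'_{\alpha_1,\beta_1}+\gamma_2 s'_{\alpha_2,\beta_2}$ can only vanish on the sub-interval $(l,r)$ between the two maximisers $\lambda_i^*$, and on that sub-interval the chosen first function $s'_{\alpha_1,\beta_1}$ is \emph{nonvanishing}; Koiran on $(l,r)$ then gives $Z(f')\le 1+Z(W_3)+0\le 4$, whence $Z(f)\le 5$. Case~3 (which your dichotomy ``$\alpha_i,\beta_i>0$ or $\alpha_i,\beta_i<0$'' does not even explicitly cover, though the endpoint limits are still equal there) requires a separate, more computational treatment: the paper passes to a suitable $g''$ that factors as a positive function times an explicit cubic $q_3$, and combines a sign-pattern analysis of its coefficients with the boundary values $g'(0+),g'(1-)>0$ and Corollary~\ref{cor:Rolle++} to force $Z(f)\le 4$ in the only dangerous sub-case $Z(g'')=3$.
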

\begin{proof}
We consider the zeros of the function $f$ defined in Equation~\eqref{eq:sum_tritri},
in particular, the sequence of sign-characteristic functions $1, s_{\alpha_1,\beta_1}, s_{\alpha_2,\beta_2}$,
and we introduce the corresponding Wronskians,
$W_1=W(1)=1$, $W_2=W(1, s_{\alpha_1,\beta_1})$, and $W_3=W(1, s_{\alpha_1,\beta_1}, s_{\alpha_2,\beta_2})$.
By Proposition~\ref{pro:Wronskian} in Appendix~\ref{app:sc}, $Z(W_3) \le 3$.

Essentially, we need to analyze three cases.
\begin{enumerate}
\item
$\alpha_1 \cdot \beta_1\le0$:

Then, 
$s_{\alpha_1,\beta_1}$ is strictly monotone,
$W_2 = W(1,s_{\alpha_1,\beta_1}) = s'_{\alpha_1,\beta_1} \neq 0$, and hence $Z(W_2) = 0$.
By Theorem~\ref{thm:Koiran}, % \cite[Theorem~9]{Koiran2015a},
\begin{align*}
Z(f) &\le 3-1+Z(W_3)+Z(W_2)+2 \, Z(W_1) \\
&= 2 + Z(W_3) \\
&\le 5 .
\end{align*}
The same argument applies for $\alpha_2 \cdot \beta_2\le0$.
%Moreover, it applies for $\alpha_1 \cdot \beta_1=0$ and for $\alpha_2 \cdot \beta_2=0$.
\item
$\alpha_1 , \beta_1 > 0$ and $\alpha_2 , \beta_2 > 0$:

We consider 
\[
f'(\lambda) = \gamma_1 \, s'_{\alpha_1,\beta_1}(\lambda) + \gamma_2 \, s'_{\alpha_2,\beta_2}(\lambda)
\]
with
\[
s'_{\alpha_i,\beta_i}(\lambda) = s_{\alpha_i-1,\beta_i-1}(\lambda) \left( \alpha_i(1-\lambda)-\beta_i \lambda \right) 
\]
and
\[
\lambda^*_i = \frac{\alpha_i}{\alpha_i+\beta_i}, \quad 0 < \lambda^*_i < 1 .
\]
Hence, 
\[
s'_{\alpha_i,\beta_i}(\lambda)
\begin{cases}
\ge 0 & \text{for } \lambda \in (0,\lambda^*_i] , \\ 
\le 0 & \text{for } \lambda \in [\lambda^*_i,1) ,
\end{cases}
\]
%$s'_{\alpha_i,\beta_i}(\lambda)\ge0$ for $\lambda \in (0,\lambda^*_i]$ and $s'_{\alpha_1,\beta_1}(\lambda)\le0$ for $\lambda \in [\lambda^*_1,1)$,
%whereas $-s'_{\alpha_2,\beta_2}(\lambda)\le0$ for $\lambda \in (0,\lambda^*_2)$ and $-s'_{\alpha_2,\beta_2}(\lambda)\ge0$ for $\lambda \in [\lambda^*_2,1)$.
and $s'_{\alpha_1,\beta_1}$ and $s'_{\alpha_2,\beta_2}$ have opposite signs
on the interval $(l,r)$ with $l = \min(\lambda^*_1,\lambda^*_2)$ and $r = \max(\lambda^*_1,\lambda^*_2)$
and only there.
(If $\lambda^*_1=\lambda^*_2$, that is, $\alpha_1/\beta_1=\alpha_2/\beta_2$,
then $f'$ has exactly one zero.)

By the properties of Wronskians, $W(s'_{\alpha_1,\beta_1},s'_{\alpha_2,\beta_2}) = W(1,s_{\alpha_1,\beta_1},s_{\alpha_2,\beta_2}) = W_3$.
By Theorem~\ref{thm:Koiran} with the interval $(l,r)$,
\begin{align*}
Z(f') &\le 2-1+Z(W(s'_{\alpha_1,\beta_1},s'_{\alpha_2,\beta_2}))+Z(W(s'_{\alpha_1,\beta_1})) \\
&= 1 + Z(W_3) \\
&\le 4 ,
\end{align*}
and by Rolle's Theorem, $Z(f) \le 5$.

The same argument applies for $\alpha_1 , \beta_1 < 0$ and $\alpha_2 , \beta_2 < 0$.
\item
$\alpha_1 , \beta_1 > 0$ and $\alpha_2 , \beta_2 < 0$:

Again, we consider
\[
f'(\lambda) = \gamma_1 \, s'_{\alpha_1,\beta_1}(\lambda) + \gamma_2 \, s'_{\alpha_2,\beta_2}(\lambda)
\]
which has the same zeros as
\[
g(\lambda) = \gamma_1 \, s_{\alpha_1-\alpha_2,\beta_1-\beta_2}(\lambda) \left( \alpha_1(1-\lambda)-\beta_1 \lambda \right) 
+ \gamma_2 \left( \alpha_2(1-\lambda)-\beta_2 \lambda \right) .
\]
Now,
\[
g'(\lambda) =  \gamma_1 \, s_{\alpha_1-\alpha_2-1,\beta_1-\beta_2-1}(\lambda) \cdot q_2(\lambda) - \gamma_2 \, (\alpha_2+\beta_2) 
\]
with the polynomial (of degree at most two)
\[
q_2(\lambda) = \left[ (\alpha_1-\alpha_2)(1-\lambda)-(\beta_1-\beta_2) \lambda \right]
(\alpha_1(1-\lambda) -\beta_1 \lambda) - \lambda (1-\lambda)(\alpha_1+\beta_1) .
\]
By Rolle's Theorem twice, $Z(f) \le Z(g')+2$.
Finally,
\begin{align*}
g''(\lambda) &= \gamma_1 \, s_{\alpha_1-\alpha_2-2,\beta_1-\beta_2-2}(\lambda) \cdot q_3 (\lambda) 
\end{align*}
with the polynomial (of degree at most three)
\begin{align*}
q_3(\lambda) %=& + \alpha_1 (\alpha_1 - \alpha_2) (\alpha_1 - \alpha_2 - 1) + \ldots \lambda + \ldots \lambda^2 \\
%& - (\alpha_1 + \beta_1) (\alpha_1 - \alpha_2 + \beta_1 - \beta_2) (\alpha_1 - \alpha_2 + \beta_1 - \beta_2 + 1) \, \lambda^3 \\
=& + \alpha_1 (\alpha_1 - \alpha_2) (\alpha_1 - \alpha_2 - 1) \, (1-\lambda)^3 \\
%+ \ldots (1-\lambda)^2 \lambda + \ldots (1-\lambda) \lambda^2 \\
& - (\alpha_1 - \alpha_2) [ 2 \alpha_1 (\beta_1 - \beta_2 + 1) + \beta_1 (\alpha_1 - \alpha_2 + 1) ] \, \lambda(1-\lambda)^2 \\
&+ (\beta_1 - \beta_2) [ \alpha_1 (\beta_1 - \beta_2 + 1) + 2 \beta_1 (\alpha_1 - \alpha_2 + 1) ] \, \lambda^2(1-\lambda) \\
& - \beta_1 (\beta_1 - \beta_2) (\beta_1 - \beta_2 -1) \, \lambda^3 .
\end{align*}
%where we only show the crucial coefficients.
By Rolle's Theorem, $Z(g') \le Z(g'') + 1$.
Hence, $Z(f) \le Z(g') + 2 \le Z(g'') + 3$,
and the problematic case is $Z(g'')=3$.

In fact, we consider $\tilde q_3 \colon \R_> \to \R$ such that $q_3(\lambda)/(1-\lambda)^3 = \tilde q_3(\lambda/(1-\lambda))$.
That is, zeros of $q_3$ on $(0,1)$ correspond to zeros of $\tilde q_3$ on $(0,\infty)$. 
Now, $Z(g'')=3$ implies three sign changes in $\tilde q_3$ 
and hence $\alpha_1 - \alpha_2 - 1>0$ and $\beta_1 - \beta_2 -1>0$.
In this case,
\begin{gather*}
g'(0) = g'(1) = - \gamma_2 \, (\alpha_2+\beta_2) >0 
\end{gather*}
%$g'(0) = g'(1) = - \gamma_2 \, (\alpha_2+\beta_2) >0$
and
\begin{align*}
g''(0+) &= \gamma_1 \, s_{\alpha_1-\alpha_2-2,\beta_1-\beta_2-2}(0+) \cdot \alpha_1 (\alpha_1 - \alpha_2) (\alpha_1 - \alpha_2 - 1) > 0 .
%h'(1-) &= - \gamma_1 \, s_{\alpha_1-\alpha_2-2,\beta_1-\beta_2-2}(1-) \cdot \beta_1 (\beta_1 - \beta_2) (\beta_1 - \beta_2 - 1) < 0 .
\end{align*}
%That is, $h$ has either one local maximum and hence $Z(h)=0$,
%or it has two local maxima and one local minimum and hence $Z(h) \le 2$.
By Corollary~\ref{cor:Rolle++} in Appendix~\ref{app:Rolle} (for $g'$), $Z(g') \le Z(g'')-1 = 3-1=2$.
Ultimately, $Z(f) \le 4$.

The same argument applies for $\alpha_1 , \beta_1 < 0$ and $\alpha_2 , \beta_2 > 0$.
\end{enumerate}
\end{proof}

Finally, we refine the upper bound in terms of the exponents.
In fact, further case distinctions are possible.
Here we present a compromise between completeness and simplicity.
\begin{thm} \label{thm:tritri_exp}
Let $f \colon (0,1) \to \R$, 
\[
f(\lambda) = \gamma_1 \, s_{\alpha_1,\beta_1}(\lambda) + \gamma_2 \, s_{\alpha_2,\beta_2}(\lambda) - 1
\]
with 
$(\alpha_i, \beta_i) \neq (0,0)$, 
$(\alpha_1, \beta_1) \neq (\alpha_2, \beta_2)$, 
and $\gamma_1,\gamma_2>0$.
%Let $W_3 \colon (0,1) \to \R$, 
%\[
%W_3(\lambda)=W(1,s_{\alpha_1,\beta_1}(\lambda), s_{\alpha_2,\beta_2}(\lambda)) .
%\]
Then, $p_3 \colon (0,1) \to \R$,
\[
p_3(\lambda) = \frac{ W(1,s_{\alpha_1,\beta_1}(\lambda), s_{\alpha_2,\beta_2}(\lambda)) }{ s_{\alpha_1+\alpha_2-3,\beta_1+\beta_2-3}(\lambda) } ,
\]
is a polynomial of degree at most three,
%and $V_3 \colon (0,1) \to \R$, 
%\[
%V_3(\lambda) = \left[ s_{\alpha_1-\alpha_2,\beta_1-\beta_2}(\lambda) \left( \alpha_1(1-\lambda)-\beta_1 \lambda \right) \right]'' .
%\]
%The following statements hold.
and the following statements hold.
\begin{enumerate}[1.]
\item
If $\alpha_1 \cdot \beta_1<0$ and $\alpha_2 \cdot \beta_2>0$ (or vice versa), then $Z(f) \le 2 + Z(p_3) \le 5$. \\
If $\alpha_1 \cdot \beta_1<0$ and $\alpha_2 \cdot \beta_2<0$ or 
one of the exponents $\alpha_1,\beta_1, \alpha_2,\beta_2$ is zero, \\
then $Z(f) \le 2 + Z(p_3) \le 4$.
%\item[2a.]
%if $\alpha_1,\beta_1>0$, $\alpha_2,\beta_2>0$, and $(\alpha_1-\alpha_2)(\beta_1-\beta_2)>0$, \\ then $Z(f) \le 1+Z(W_3) \le 4$,
%\item[2b.]
%if $\alpha_1,\beta_1>0$, $\alpha_2,\beta_2>0$, and $(\alpha_1-\alpha_2)(\beta_1-\beta_2)\le0$, \\ then $Z(f) \le 2+Z(W_3) \le 4$,
\item
If $\alpha_1,\beta_1>0$ and $\alpha_2,\beta_2>0$ (or both $<$), \\ 
%then $Z(f) \le 1+Z(W_3) \le 4$ if $Z(W_3)$ is odd \\
%and $Z(f) \le 2+Z(W_3) \le 4$ if $Z(W_3)$ is even.
then $Z(f) \le 4$ if $Z(p_3)=2$ or $3$, and $Z(f) \le 2$ if $Z(p_3)=0$ or $1$.
\item
If $\alpha_1,\beta_1>0$ and $\alpha_2,\beta_2<0$ (or vice versa), then $Z(f) \le 4$.
%\item
%If one of the exponents $\alpha_1,\beta_1, \alpha_2,\beta_2$ is zero, 
%then $Z(f) \le 2 + Z(W_3) \le 4$.
\end{enumerate}
As a consequence, 
if $Z(f) = 5$, then $\alpha_1 \cdot \beta_1<0$ and $\alpha_2 \cdot \beta_2>0$ (or vice versa) and $Z(p_3)=3$.
%\begin{enumerate}
%\item[5.]
%If %$\alpha_1 \cdot \beta_1\le0$ and $\alpha_2 \cdot \beta_2\le0$
%$s_{\alpha_1,\beta_1}, s_{\alpha_2,\beta_2}$ are monotone 
%and $s'_{\alpha_1,\beta_1} \cdot s'_{\alpha_2,\beta_2} > 0$,
%then $Z(f)=1$.
%\end{enumerate}
\end{thm}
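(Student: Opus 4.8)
The plan is to prove the three numbered statements separately --- they form a case distinction on the signs of $(\alpha_1,\beta_1)$ and $(\alpha_2,\beta_2)$ that refines the three cases in the proof of Theorem~\ref{thm:tritri} --- and then read off the concluding assertion, which is immediate once the three statements are in hand. The tools I would use are Koiran's theorem (Theorem~\ref{thm:Koiran}), the Wronskian bound $Z(W_i)\le\binom{i}{2}$ of Proposition~\ref{pro:Wronskian}, the identity $s_{\alpha,\beta}'=s_{\alpha-1,\beta-1}\,(\alpha(1-\lambda)-\beta\lambda)$ already used above, the elementary identity $W(1,s_1,s_2)=W(s_1',s_2')$, and a refined Rolle argument in the spirit of Corollary~\ref{cor:Rolle++}.

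For Statement~1 (some $\alpha_i\beta_i\le 0$, i.e.\ some $s_{\alpha_i,\beta_i}$ strictly monotone, which includes the case where an exponent vanishes), I would apply Koiran's theorem to $f=(-1)+\gamma_1 s_{\alpha_1,\beta_1}+\gamma_2 s_{\alpha_2,\beta_2}$ with the functions ordered so that $W_1$ and $W_2$ are nowhere zero on $(0,1)$; this is possible precisely because some $s_{\alpha_i,\beta_i}$ is strictly monotone (so that $W_2=\pm s_{\alpha_i,\beta_i}'\ne 0$). This gives $Z(f)\le 2+Z(W_3)$, and $Z(W_3)\le 3$ by Proposition~\ref{pro:Wronskian}, hence $Z(f)\le 5$. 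To upgrade to $Z(W_3)\le 2$ (hence $Z(f)\le 4$) when both $s_i$ are monotone or an exponent vanishes, I would compute $W_3=W(s_1',s_2')=s_{\alpha_1+\alpha_2-3,\,\beta_1+\beta_2-3}(\lambda)\,P(\lambda)$ with $P=\ell_1\ell_2 L+\lambda(1-\lambda)K$, where $\ell_i=\alpha_i-(\alpha_i+\beta_i)\lambda$, $L$ is linear, and $K=\alpha_2\beta_1-\alpha_1\beta_2$ is constant, so $\deg P\le 3$. If some exponent vanishes, say $\alpha_1=0$, then $\ell_1=-\beta_1\lambda$, so $P$ has a factor $\lambda$ and the remaining quadratic gives $Z(W_3)=Z(P)\le 2$ on $(0,1)$ (symmetrically for $\beta_1=0$, with a factor $1-\lambda$, and for the indices of $s_2$). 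If both $s_i$ are strictly monotone, the only nontrivial subcase is that one is increasing and one decreasing --- if they are co-monotone then $f'=\gamma_1 s_1'+\gamma_2 s_2'$ is sign-definite and $Z(f)\le 1$ --- and there I would rule out three distinct roots of the cubic $P$ in $(0,1)$: such roots would be simple extrema of $\Phi:=s_2'/s_1'$ arranged max--min--max, contradicting that $\Phi'$ is positive at both endpoints of $(0,1)$, which follows from the leading-order asymptotics using $(\alpha_2-\alpha_1)(\beta_2-\beta_1)<0$. Hence $Z(W_3)\le 2$.

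For Statement~2 (both $(\alpha_i,\beta_i)$ positive, or both negative), I would reuse the geometry of case~2 of Theorem~\ref{thm:tritri}: each $s_i'$ changes sign exactly once, at $\lambda_i^*=\alpha_i/(\alpha_i+\beta_i)\in(0,1)$, so with $l=\min\lambda_i^*$ and $r=\max\lambda_i^*$ the summands $s_1',s_2'$ have opposite signs precisely on $(l,r)$, while $f'$ has constant sign on $(0,l)$ and on $(r,1)$. Applying Koiran's theorem on $(l,r)$ with $f_1$ taken to be the summand that vanishes at the left endpoint (so that its Wronskian is nonzero there) yields $Z(f')\le 1+Z(W_3)$. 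The improvement over Theorem~\ref{thm:tritri} is a parity/shape argument: $f'$ changes sign over $(0,1)$ (positive near $0$ and negative near $1$ in the positive case, conversely in the negative case), so the number of its sign-change zeros --- equivalently, the number of local extrema of $f$ --- is odd; combined with $Z(f')\le 1+Z(W_3)$ this forces at most one extremum when $Z(W_3)\in\{0,1\}$ and at most three when $Z(W_3)\in\{2,3\}$. Since $f$ has the same limit at both endpoints ($-1$ in the positive case, $+\infty$ in the negative case) and its extrema alternate max--min--max (resp.\ min--max--min), counting at most two positive humps (resp.\ two negative troughs) gives $Z(f)\le 2$, resp.\ $Z(f)\le 4$; the degenerate case $\lambda_1^*=\lambda_2^*$, where $f'$ has a single zero, is handled directly and gives $Z(f)\le 2$.

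Statement~3 ($\alpha_1,\beta_1>0$ and $\alpha_2,\beta_2<0$, or vice versa) is already established: it is the conclusion of case~3 of the proof of Theorem~\ref{thm:tritri}, namely $Z(f)\le 4$. Finally, Statements~1--3 are exhaustive, and all of them give $Z(f)\le 4$ except the first subcase of Statement~1, where $Z(f)\le 2+Z(W_3)$; so $Z(f)=5$ forces $\alpha_1\beta_1<0$ and $\alpha_2\beta_2>0$ (or vice versa) together with $Z(W_3)=3$. I expect the main obstacle to be the refinement $Z(W_3)\le 2$ in Statement~1 for the ``both strictly monotone'' subcase: it requires the explicit Wronskian formula together with a careful sign/boundary analysis of the cubic factor $P$ on $(0,1)$ --- equivalently, control of the number and type of extrema of $\Phi=s_2'/s_1'$ --- where one must verify that the asymptotic signs of $\Phi'$ at the two endpoints are genuinely forced by the case hypotheses; the parity argument in Statement~2 is conceptually transparent but also needs attention to the differing endpoint behavior of $f$ in its two subcases.
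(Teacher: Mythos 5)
Your proposal is correct and follows the same overall architecture as the paper's proof: reuse the three cases of the proof of Theorem~\ref{thm:tritri}, exploit the explicit factorization $W_3 = s_{\alpha_1+\alpha_2-3,\beta_1+\beta_2-3}\cdot p_3$ with $p_3$ cubic, and refine via Koiran's theorem together with parity-sensitive versions of Rolle (your parity/shape argument in Statement~2 is exactly what Corollary~\ref{cor:Rolle+} packages). The one place where you genuinely diverge is the refinement $Z(W_3)\le 2$ in the second part of Statement~1: the paper substitutes $\mu=\lambda/(1-\lambda)$ and counts sign changes in the coefficients of the resulting cubic $\tilde p_3$ (using the displayed extreme coefficients $-\alpha_1\alpha_2(\alpha_1-\alpha_2)$ and $\beta_1\beta_2(\beta_1-\beta_2)$), whereas you rule out three simple zeros of $\Phi'=W(s_1',s_2')/(s_1')^2$ by showing $\Phi'>0$ near both endpoints of $(0,1)$ from the leading asymptotics $s_i'\sim\alpha_i\lambda^{\alpha_i-1}$ and $s_i'\sim-\beta_i(1-\lambda)^{\beta_i-1}$; I checked that under $\alpha_1>0>\beta_1$, $\alpha_2<0<\beta_2$ both endpoint signs do come out positive, so the argument is sound, and a cubic with three distinct real roots has only simple roots, so the sign-change count applies. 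Your endpoint argument avoids computing the middle coefficients of $\tilde p_3$ (which the paper elides anyway), at the cost of a boundary-asymptotics verification; the paper's Descartes count is more mechanical once the full cubic is written out. Your factor-of-$\lambda$ (or $1-\lambda$) observation for the vanishing-exponent subcase, via $P=\ell_1\ell_2(\ell_2-\ell_1)+\lambda(1-\lambda)(\alpha_2\beta_1-\alpha_1\beta_2)$, is a clean and correct substitute for the paper's sign-change count there as well.
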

\begin{proof}
%First, we state $W_3$. Indeed,
By Proposition~\ref{pro:Wronskian} in Appendix~\ref{app:sc},
\[
W_3(\lambda) = s_{\alpha_1+\alpha_2-3,\beta_1+\beta_2-3}(\lambda) \cdot p_3(\lambda)
\]
with a polynomial $p_3 \colon (0,1) \to \R$ of degree at most three. Explicitly,
\begin{align*}
p_3(\lambda) % =& - \alpha_1 \alpha_2 (\alpha_1 - \alpha_2) + \ldots \lambda + \ldots \lambda^2 \\
%& + (\alpha_1 + \beta_1) (\alpha_2 + \beta_2) (\alpha_1 - \alpha_2 + \beta_1 - \beta_2)  \, \lambda^3 \\
=& - \alpha_1 \alpha_2 (\alpha_1 - \alpha_2) \, (1-\lambda)^3 
+ \ldots (1-\lambda)^2 \lambda + \ldots (1-\lambda) \lambda^2 \\
& + \beta_1 \beta_2 (\beta_1 - \beta_2) \, \lambda^3 ,
\end{align*}
where we only show the crucial coefficients.
Clearly, $Z(W_3) = Z(p_3) \le 3$.
%and
%\[
%V_3(\lambda) = s_{\alpha_1-\alpha_2-2,\beta_1-\beta_2-2}(\lambda) \cdot q_3 (\lambda) 
%\]
%with the cubic polynomial
%\begin{align*}
%q_3(\lambda) =& + \alpha_1 (\alpha_1 - \alpha_2) (\alpha_1 - \alpha_2 - 1) + \ldots \lambda + \ldots \lambda^2 \\
%& - (\alpha_1 + \beta_1) (\alpha_1 - \alpha_2 + \beta_1 - \beta_2) (\alpha_1 - \alpha_2 + \beta_1 - \beta_2 + 1) \, \lambda^3 \\
%=& + \alpha_1 (\alpha_1 - \alpha_2) (\alpha_1 - \alpha_2 - 1) \, (1-\lambda)^3 
%+ \ldots (1-\lambda)^2 \lambda + \ldots (1-\lambda) \lambda^2 \\
%& - (\alpha_1 - \alpha_2) [ 2 \alpha_1 (\beta_1 - \beta_2 + 1) + \beta_1 (\alpha_1 - \alpha_2 + 1) ] \, \lambda(1-\lambda)^2 \\
%&+ (\beta_1 - \beta_2) [ \alpha_1 (\beta_1 - \beta_2 + 1) + 2 \beta_1 (\alpha_1 - \alpha_2 + 1) ] \, \lambda^2(1-\lambda) \\
%& - \beta_1 (\beta_1 - \beta_2) (\beta_1 - \beta_2 -1) \, \lambda^3 .
%\end{align*}
%
Now, we consider the three statements.
\begin{enumerate}
\item
Let $\alpha_1 \cdot \beta_1\le0$ (or $\alpha_2 \cdot \beta_2\le0$).
By the corresponding case~1 in the proof of Theorem~\ref{thm:tritri},
$Z(f) \le 2 + Z(W_3) \le 5$.

For the refined bound, we consider $\tilde p_3 \colon \R_> \to \R$ such that $p_3(\lambda)/(1-\lambda)^3 = \tilde p_3(\lambda/(1-\lambda))$.
That is, zeros of $p_3$ on $(0,1)$ correspond to zeros of $\tilde p_3$ on $(0,\infty)$. 

If (i) one of the exponents $\alpha_1,\beta_1, \alpha_2,\beta_2$ is zero
or (ii) $\alpha_1 \cdot \beta_1<0$, $\alpha_2 \cdot \beta_2<0$, and additionally $\alpha_1 \cdot \alpha_2<0$,
then there are at most two sign changes in $\tilde p_3$ and hence $Z(W_3)\le2$.
If $\alpha_1 \cdot \alpha_2>0$ in (ii), then $s_{\alpha_1,\beta_1}, s_{\alpha_2,\beta_2}$ are strictly monotonic,
$s'_{\alpha_1,\beta_1} \cdot s'_{\alpha_2,\beta_2} > 0$, and $Z(f)=1$.
\item
Let $\alpha_1,\beta_1>0$ and $\alpha_2,\beta_2>0$ (or both $<$).
By the corresponding case~2 in the proof of Theorem~\ref{thm:tritri},
$Z(f') \le 1 + Z(W_3) \le 4$.
Further, $f(0)=f(1)=-1$ (or $f(0+)=f(1-)=\infty$).

By Corollary~\ref{cor:Rolle+} in Appendix~\ref{app:Rolle}, if $Z(f')$ is even, then $Z(f) \le Z(f') \le 4$.
By Rolle's Theorem, if $Z(f')$ is odd, then $Z(f) \le Z(f') + 1 \le 4$.
%Obviously, if $Z(W_3) = 3$, then $Z(f) \le 1 + Z(W_3)$,
%and, if $Z(W_3) = 2$, then $Z(f) \le 2 + Z(W_3)$.

If $Z(W_3) \le 1$, then $Z(f') \le 2$.
By the same arguments as above, $Z(f) \le 2$.
%Obviously, if $Z(W_3) = 1$, then $Z(f) \le 1 + Z(W_3)$
%and, if $Z(W_3) = 0$, then $Z(f) \le 2 + Z(W_3)$.
\item
Let $\alpha_1 , \beta_1 > 0$ and $\alpha_2 , \beta_2 < 0$ (or vice versa).
By the corresponding case~3 in the proof of Theorem~\ref{thm:tritri},
$Z(f) \le 4$ if $Z(g'')=3$.
It remains to consider the cases $Z(g'')=2,1,0$.
Recall $Z(g) = Z(f')$, % $g\sim f'$, that is, $\{x \in (0,1) \mid g(x)=0\} = \{x \in (0,1) \mid f'(x)=0\}$,
$Z(f) \le Z(g')+2$ by Rolle's Theorem twice,
and
\begin{align*}
g'(0+) &= \gamma_1 \, s_{\alpha_1-\alpha_2-1,\beta_1-\beta_2-1}(0+) \cdot (\alpha_1-\alpha_2) \, \alpha_1 
- \gamma_2 \, (\alpha_2+\beta_2) > 0 , \\
g'(1-) &= \gamma_1 \, s_{\alpha_1-\alpha_2-1,\beta_1-\beta_2-1}(1-) \cdot (\beta_1-\beta_2) \, \beta_1 
- \gamma_2 \, (\alpha_2+\beta_2) > 0 .
\end{align*}

By Corollary~\ref{cor:Rolle+} (for $g'$), if $Z(g'')=2$, then $Z(g') \le Z(g'') = 2$. 
Ultimately, $Z(f) \le 4$.

By Rolle's Theorem, if $Z(g'')\le1$, then $Z(g') \le Z(g'') + 1 \le 2$.
Ultimately, $Z(f) \le 4$.
\end{enumerate}
\end{proof}

%\begin{rem}
Using Theorem~\ref{thm:tritri_exp}, we perform a search over small integer exponents
and easily find a counterexample to Kouchnirenko’s Conjecture~\cite{Khovanskij1980,Sturmfels1998}
that is even simpler than the smallest ``Haas system''~\cite{Haas2002,Dickenstein2007}.
Indeed,
\begin{equation}
\begin{aligned}
x^5/y + a \, y - 1 &= 0 , \\
y^5/x + b \, x - 1 &= 0 
\end{aligned}
\end{equation}
with $a=b=1.392$ has five positive solutions $(x,y)$. 

{\em Remark.}
In a personal communication, 
we have been informed by Maurice Rojas
that this example was also discovered (but not published) by Korben Rusek in 2013.
\end{exa}

%%%%%%%%%% %%%%%%%%%% %%%%%%%%%% %%%%%%%%%% %%%%%%%%%%

\subsection*{Acknowledgements}

This research was funded in whole, or in part, by the Austrian Science Fund (FWF), 
grant DOIs 10.55776/P33218 and 10.55776/PAT3748324 to SM and grant DOI 10.55776/P32301 to GR.

\subsection*{Conflict of interest and data availability}

On behalf of all authors, the corresponding author states 
that there is no conflict of interest
and that the manuscript has no associated data.

%%%%%%%%%% %%%%%%%%%% %%%%%%%%%% %%%%%%%%%% %%%%%%%%%%

%\clearpage

\bibliographystyle{abbrv} %siamplain
\bibliography{MR,linearstability,polynomials,crnt}

%%%%%%%%%% %%%%%%%%%% %%%%%%%%%% %%%%%%%%%% %%%%%%%%%%

\clearpage

\appendix 
\section*{Appendix}

%%%%%%%%%% %%%%%%%%%% %%%%%%%%%% %%%%%%%%%% %%%%%%%%%%

\section{Sign-characteristic functions} \label{app:sc}

As a key technique for the analysis of trinomials,
we introduce a family of {\em sign-characteristic} functions
on the open unit interval.
For $\alpha,\beta \in \R$, let
\begin{align*}
s_{\alpha,\beta} \colon & (0,1) \to \R_>, \\ 
& \lambda \mapsto \lambda^\alpha (1-\lambda)^\beta .
\end{align*}
For $\alpha,\beta\neq0$,
\[
s'_{\alpha,\beta}(\lambda) = s_{\alpha-1,\beta-1}(\lambda) \left( \alpha(1-\lambda)-\beta \lambda \right) .
\]
Hence,
$s_{\alpha,\beta}$ has an extremum at
\[
\lambda^*= \frac{\alpha}{\alpha+\beta} \in (0,1)
\]
if and only if $\alpha \cdot \beta>0$.
Then, 
\[
s_{\alpha,\beta}(\lambda^*) = \left( \frac{\alpha}{\alpha+\beta} \right)^\alpha \left( \frac{\beta}{\alpha+\beta} \right)^\beta .
\]
We call the functions sign-characteristic since the signs ($-,0$ or $+$) of $\alpha$ and $\beta$
characterize the values ($0,1$ or $\infty$) at $0+$ and $1-$, respectively.
In particular,
\begin{itemize}
\item
if $\alpha,\beta>0$, then $s_{\alpha,\beta}(0+)=s_{\alpha,\beta}(1-)=0$, and $s_{\alpha,\beta}$ has a maximum at~$\lambda^*$,
\item
if $\alpha,\beta<0$, then $s_{\alpha,\beta}(0+)=s_{\alpha,\beta}(1-)=\infty$, and $s_{\alpha,\beta}$ has a minimum at~$\lambda^*$,
\item
if $\alpha>0>\beta$, then $s_{\alpha,\beta}(0+)=0, \, s_{\alpha,\beta}(1-)=\infty$, and $s_{\alpha,\beta}$ is strictly monotonically increasing, and
\item
if $\alpha<0<\beta$, then $s_{\alpha,\beta}(0+)=\infty, \, s_{\alpha,\beta}(1-)=0$, and $s_{\alpha,\beta}$ is strictly monotonically decreasing.
\end{itemize}

\subsection*{Roots of sign-characteristic functions}

On the monotonic parts of the sign-characteristic functions,
we introduce inverses or ``roots''.
In particular,
\begin{itemize}
\item
if $\alpha \cdot \beta<0$, then we define
\begin{align*}
r_{\alpha,\beta} \colon & \R_> \to (0,1) , \\
& \lambda \mapsto (s_{\alpha,\beta})^{-1}(\lambda) .
\end{align*}
\item
If $\alpha \cdot \beta>0$, then we denote the restrictions of $s_{\alpha,\beta}$ to $(0,\lambda^*]$ and $[\lambda^*,1)$
by $s_{\alpha,\beta}^-$ and $s_{\alpha,\beta}^+$, respectively, and we define
\begin{alignat*}{4}
r_{\alpha,\beta}^- \colon & (0,s_{\alpha,\beta}(\lambda^*)] \to (0,\lambda^*] , \quad\text{and}\quad && r_{\alpha,\beta}^+ \colon && (0,s_{\alpha,\beta}(\lambda^*)] \to [\lambda^*,1) , \\
& \lambda \mapsto (s_{\alpha,\beta}^-)^{-1}(\lambda) , && && \lambda \mapsto (s_{\alpha,\beta}^+)^{-1}(\lambda) .
\end{alignat*}
\end{itemize}
The cases $\alpha=0$ or $\beta=0$ can be treated analogously.

\subsection*{Wronskians of sign-characteristic functions} 

Wronskians of sign-characteristic functions have polynomials (with integer exponents) as factors.
In the simplest case of two functions,
\[
W(s_{\alpha_1,\beta_1},s_{\alpha_2,\beta_2}) = 
\begin{vmatrix}
s_{\alpha_1,\beta_1} & s_{\alpha_2,\beta_2} \\
s'_{\alpha_1,\beta_1} & s'_{\alpha_2,\beta_2}
\end{vmatrix}
= s_{\alpha_1+\alpha_2-1,\beta_1+\beta_2-1} \cdot p_{1}
\]
with the polynomial 
\[
p_{1}(\lambda) = (\alpha_2-\alpha_1)(1-\lambda)-(\beta_2-\beta_1) \lambda
\]
of degree at most one.

%As a consequence, the Wronskian of two sign-characteristic functions is identically zero (that is, they are linearly dependent)
%if and only if they are identical.

\newcommand{\nn}{n}

By induction,
the statement holds for arbitrary numbers of sign-characteristic functions.
\begin{pro} \label{pro:Wronskian}
For $\nn \in \N$,
the Wronskian of $\nn$ sign-characteristic functions $s_{\alpha_1,\beta_1}$, \ldots, $s_{\alpha_\nn,\beta_\nn}$
is given by
\[
W(s_{\alpha_1,\beta_1}, \ldots, s_{\alpha_\nn,\beta_\nn}) 
= s_{\bar \alpha-d, \bar \beta-d} \cdot p_d
\]
with 
\begin{align*}
\bar \alpha &= \alpha_1+\alpha_2+\ldots+\alpha_\nn , \\
\bar \beta &= \beta_1+\beta_2+\ldots+\beta_\nn ,
\end{align*}
\[
d = \binom{\nn}{2} ,
\]
and a polynomial $p_d$ of degree at most $d$.
\end{pro}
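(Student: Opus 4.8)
The plan is to proceed by induction on $\nn$, the number of functions, mirroring the structure already used in Lemma~\ref{lem:Descartes}. The base case $\nn=1$ is trivial ($W(s_{\alpha_1,\beta_1}) = s_{\alpha_1,\beta_1}$, with $d=\binom{1}{2}=0$ and $p_0$ a nonzero constant), and the case $\nn=2$ is the explicit computation displayed just before the statement, giving $s_{\bar\alpha-1,\bar\beta-1} \cdot p_1$ with $\deg p_1 = 1 = \binom{2}{2}$. The key structural fact I would extract is a \emph{factorization lemma}: each sign-characteristic function can be written as $s_{\alpha_i,\beta_i}(\lambda) = s_{0,0}(\lambda)^{?}\cdots$ — more usefully, I would factor out a common power from all columns of the Wronskian determinant. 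Writing $s_{\alpha_i,\beta_i} = \lambda^{\alpha_i}(1-\lambda)^{\beta_i}$, derivatives of such a function again have the form (monomial in $\lambda, 1-\lambda$) times (polynomial), since $\frac{\mathrm d}{\mathrm d\lambda}\bigl(\lambda^a (1-\lambda)^b\bigr) = \lambda^{a-1}(1-\lambda)^{b-1}\bigl(a(1-\lambda) - b\lambda\bigr)$.

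The main step is a column-reduction argument on the $\nn \times \nn$ Wronskian matrix $\bigl(s_{\alpha_j,\beta_j}^{(i-1)}\bigr)_{i,j}$. First I would factor $\lambda^{\alpha_j}(1-\lambda)^{\beta_j}$ out of the $j$-th column only up to the point needed, or better: use the standard identity that for functions of the form $f_j = h^{\mu_j} g_j$ with a common "envelope", one can pull a common factor out of the whole determinant. Concretely, I expect the clean route is: write $s_{\alpha_j,\beta_j} = e^{\alpha_j u + \beta_j v}$ after a logarithmic change, or — staying elementary — observe that $W(s_{\alpha_1,\beta_1},\dots,s_{\alpha_\nn,\beta_\nn}) = s_{\alpha_1,\beta_1}\cdots s_{\alpha_\nn,\beta_\nn} \cdot W^{\flat}$ where $W^{\flat}$ is a determinant built from logarithmic derivatives; but that over-divides. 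The cleanest classical fact is: $W(h^{a_1} \phi_1, \dots, h^{a_\nn}\phi_\nn)$ has $h^{a_1+\cdots+a_\nn - \binom{\nn}{2}}$ as a factor when $h$ ranges appropriately. I would apply this with $h = \lambda$ (contributing the factor $\lambda^{\bar\alpha - \binom{\nn}{2}}$) and then with $h = 1-\lambda$ (contributing $(1-\lambda)^{\bar\beta - \binom{\nn}{2}}$), which together give exactly the $s_{\bar\alpha-d,\bar\beta-d}$ factor. What remains after dividing out this envelope is a polynomial, and a degree count — each differentiation lowers the monomial exponents by one and raises the polynomial degree by at most one, with the column of the $i$-th derivative contributing degree $\le i-1$ to the determinant expansion, summing to $0+1+\cdots+(\nn-1) = \binom{\nn}{2}$ — pins down $\deg p_d \le d$; equality follows from the non-vanishing of the leading coefficient, which I would check via the $q_1 > \cdots > q_{n+2}$-type Vandermonde-style argument, or directly by computing the top-degree term as a Vandermonde determinant in the ratios $\alpha_j/\beta_j$ (cf.\ the explicit product formula in Lemma~\ref{lem:Descartes}).

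The main obstacle I anticipate is making the "pull out a common power of $\lambda$ (and of $1-\lambda$)" step rigorous and getting the exponent arithmetic exactly right, rather than off by a binomial coefficient: one must track that successive derivatives of $\lambda^{\alpha_j}(1-\lambda)^{\beta_j}$ each shed exactly one power of $\lambda$ and one of $1-\lambda$, so the $i$-th row is divisible by $\lambda^{-(i-1)}(1-\lambda)^{-(i-1)}$ relative to the $0$-th row's monomial, contributing $\sum_{i=1}^{\nn}(i-1) = \binom{\nn}{2}$ to each envelope exponent. A careful induction handles this without the slick determinant identity: assume the result for $\nn-1$, differentiate $W(s_{\alpha_1,\beta_1},\dots,s_{\alpha_\nn,\beta_\nn})$ using the recursion $W(f_1,\dots,f_\nn) = \frac{W(f_1)^{\nn-1}}{\;} \cdot (\text{Wronskian of quotients})$ or the more hands-on cofactor expansion, and verify that the envelope and the degree both increment correctly. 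The degree-exactness (that $p_d$ genuinely has degree $d$, not less) is the only place genericity-type input is needed, and I would either state it as "generically" or supply the leading-coefficient Vandermonde computation, which is routine but slightly tedious.
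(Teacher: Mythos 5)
Your approach matches the paper's, whose entire proof is ``By induction''; the concrete route you sketch is the right way to fill that in. The clean version is: by induction on the order of differentiation, $s_{\alpha,\beta}^{(k)} = s_{\alpha-k,\beta-k}\cdot q_k$ with $q_k$ a polynomial of degree at most $k$ (since $s_{\alpha,\beta}' = s_{\alpha-1,\beta-1}\bigl(\alpha(1-\lambda)-\beta\lambda\bigr)$, and the term coming from $q_k'$ gets multiplied by $\lambda(1-\lambda)$, which restores the envelope while raising the degree by one). Hence every term of the permutation expansion of the $n\times n$ Wronskian determinant carries the common factor $s_{\bar\alpha-d,\bar\beta-d}$ with $d=\sum_{i=1}^{n}(i-1)=\binom{n}{2}$, times a polynomial of degree at most $\sum_{i=1}^{n}(i-1)=d$. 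This is exactly your ``envelope plus degree count'' argument, and it needs no induction on $n$ at all beyond the derivative formula.

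One caveat: do not try to prove that $p_d$ has degree \emph{exactly} $d$, because that already fails for special exponents at $n=2$: $p_1(\lambda)=(\alpha_2-\alpha_1)-\bigl[(\alpha_2-\alpha_1)+(\beta_2-\beta_1)\bigr]\lambda$ is a nonzero constant whenever $\alpha_2-\alpha_1=\beta_1-\beta_2\neq 0$. The proposition is only used through the bound $Z(W_i)\le\binom{i}{2}$ in Theorem~\ref{thm:tnomial}, for which $\deg p_d\le d$ suffices; the Vandermonde leading-coefficient computation you propose is therefore unnecessary and would in any case require a genericity hypothesis that the statement does not impose.
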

%
%\begin{proof}
%By induction.
%\end{proof}

%%%%%%%%%% %%%%%%%%%% %%%%%%%%%% %%%%%%%%%% %%%%%%%%%%

%\clearpage

\section{Rolle's Theorem} \label{app:Rolle}

\begin{thm}[Rolle, anno domini 1691]
Let $f \colon [a,b] \to \R$ be continuous on the closed interval $[a,b]$ and differentiable on the open interval $(a,b)$.
If $f(a)=f(b)$, then there is $c \in (a,b)$ with $f'(c)=0$.
\end{thm}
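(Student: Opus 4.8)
The plan is to reduce the statement to two classical ingredients: the extreme value theorem and Fermat's interior-extremum principle. First I would observe that $f$ is continuous on the compact interval $[a,b]$, so it attains a maximum value $M = f(p)$ and a minimum value $m = f(q)$ at some points $p, q \in [a,b]$.

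Next I would split into cases. If both $p$ and $q$ belong to $\{a,b\}$, then the hypothesis $f(a) = f(b)$ forces $M = m$, so $f$ is constant on $[a,b]$ and $f'(c) = 0$ for \emph{every} $c \in (a,b)$; any such $c$ works. Otherwise at least one of $p,q$ lies in the open interval $(a,b)$; fix such a point and call it $c$. Then $f$ attains a global, hence in particular a local, extremum at the interior point $c$.

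The key step is then Fermat's theorem: a function differentiable at an interior extremum has vanishing derivative there. For concreteness suppose $c$ is a local maximum (the minimum case is symmetric). For small $h>0$ the difference quotient $\bigl(f(c+h)-f(c)\bigr)/h$ is $\le 0$, while for small $h<0$ it is $\ge 0$. Since $f$ is differentiable at $c$, both one-sided limits exist and equal $f'(c)$; hence simultaneously $f'(c) \le 0$ and $f'(c) \ge 0$, so $f'(c)=0$, as desired.

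I expect no serious obstacle, as this is a textbook argument; the only point that needs care is the case split. It is tempting to assert outright that "the extremum lies in $(a,b)$", but that can fail --- for instance when only the maximum is attained in the interior while the minimum sits at $a$, or when $f$ is constant and neither extremum is forced to be interior. The formulation above sidesteps this: it suffices that \emph{at least one} of the two extrema be interior, and that is automatic unless $M=m$, i.e.\ unless $f$ is constant, which is treated separately.
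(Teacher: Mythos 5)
Your proof is correct and complete: the reduction to the extreme value theorem plus Fermat's interior-extremum principle, with the constant-function case handled separately, is the standard argument, and you are right to be careful that only \emph{one} of the two extrema need be interior. The paper states Rolle's theorem without proof (it is invoked as a classical tool in Appendix~B), so there is nothing to compare against; your textbook argument fills that in correctly.
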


Rolle’s theorem ensures that, for two zeros of $f$, there exists a zero of $f'$ that is strictly between them.
Let $Z(g)$ denote the number of zeros of $g \colon [a,b] \to \R$. % on $[a,b]$.

\begin{cor} \label{cor:Rolle}
Let $f \colon [a,b] \to \R$ be continuous on the closed interval $[a,b]$ and differentiable on the open interval $(a,b)$.
Then, $Z(f) \le Z(f')+1$.
\end{cor}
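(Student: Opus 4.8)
The plan is to reduce the statement directly to Rolle's Theorem by an elementary counting argument. First I would dispose of the edge cases: if $Z(f)=0$ the inequality $0\le Z(f')+1$ is immediate, and if $f$ has infinitely many zeros in $[a,b]$ then the argument below produces infinitely many zeros of $f'$ as well, so the inequality holds in the extended sense. Hence I may assume $f$ has finitely many zeros, say $Z(f)=k\ge 1$.

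Next I would enumerate the zeros of $f$ in increasing order, $a\le x_1 < x_2 < \cdots < x_k \le b$. For each $i\in\{1,\ldots,k-1\}$, I apply Rolle's Theorem to $f$ restricted to the closed subinterval $[x_i,x_{i+1}]$: the function $f$ is continuous on $[x_i,x_{i+1}]$, differentiable on $(x_i,x_{i+1})$, and $f(x_i)=f(x_{i+1})=0$, so there exists $c_i\in(x_i,x_{i+1})$ with $f'(c_i)=0$.

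The key observation is that the open intervals $(x_1,x_2),(x_2,x_3),\ldots,(x_{k-1},x_k)$ are pairwise disjoint, so the points $c_1,\ldots,c_{k-1}$ are $k-1$ \emph{distinct} zeros of $f'$ in $(a,b)$. Therefore $Z(f')\ge k-1 = Z(f)-1$, and rearranging gives $Z(f)\le Z(f')+1$, as claimed.

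I do not expect any genuine obstacle here; the argument is the classical one and the only points deserving a word of care are the bookkeeping in the two edge cases and the disjointness of the subintervals, both of which are immediate from the strict ordering $x_1<\cdots<x_k$.
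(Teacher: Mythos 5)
Your argument is correct and is essentially the paper's own proof: the paper likewise uses the $Z(f)-1$ open intervals between consecutive zeros of $f$, applies Rolle's Theorem on each, and concludes $Z(f')\ge Z(f)-1$. You merely spell out the bookkeeping (ordering the zeros, disjointness of the intervals, the degenerate cases) that the paper leaves implicit.
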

\begin{proof}
The $Z(f)$ zeros of $f$ define $Z(f)-1$ open intervals with at least one zero of $f'$ in each of them.
That is, $Z(f') \ge Z(f)-1$.
\end{proof}

\clearpage

The result can be refined if the signs of the function (and its derivative) are known at the endpoints.

\begin{cor} \label{cor:Rolle+}
Let $f \colon [a,b] \to \R$ be continuous on the closed interval $[a,b]$ and differentiable on the open interval $(a,b)$.
\begin{enumerate}[(i)]
\item
If $Z(f')$ is even and $f(a) \cdot f(b)>0$, then $Z(f) \le Z(f')$.
\item
If $Z(f')$ is odd and $f(a) \cdot f(b)<0$, then $Z(f) \le Z(f')$.
\end{enumerate}
\end{cor}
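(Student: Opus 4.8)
The plan is to argue by contradiction from the bound $Z(f) \le Z(f') + 1$ of Corollary~\ref{cor:Rolle}. Assume $Z(f') < \infty$ (otherwise there is nothing to prove) and suppose $Z(f) > Z(f')$; then necessarily $Z(f) = Z(f') + 1$. Write $m = Z(f) \ge 1$, and note that in both cases~(i) and~(ii) the hypothesis gives $f(a) f(b) \neq 0$, so the $m$ zeros $x_1 < \dots < x_m$ of $f$ all lie in the open interval $(a,b)$.

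First I would record a sign pattern for $f$. Setting $x_0 = a$ and $x_{m+1} = b$, the function $f$ is continuous and nowhere zero on each open interval $(x_j, x_{j+1})$, $0 \le j \le m$, hence has a constant sign $\varepsilon_j \in \{+1, -1\}$ there; by continuity $\sign f(a) = \varepsilon_0$ and $\sign f(b) = \varepsilon_m$. Call $x_i$ a \emph{crossing} zero if $\varepsilon_{i-1} \neq \varepsilon_i$ and a \emph{touching} zero otherwise, and let $N$ and $k$ be the respective counts, so $N + k = m$. Telescoping the signs gives $\varepsilon_m = (-1)^N \varepsilon_0$, so that $f(a) f(b) > 0$ exactly when $N$ is even and $f(a) f(b) < 0$ exactly when $N$ is odd.

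Next I would lower-bound $Z(f')$. By Rolle's Theorem each of the $m - 1$ open intervals $(x_i, x_{i+1})$ contains a zero of $f'$, and at each touching zero $x_i$, an interior point of $[a,b]$, the function $f$ has a strict local extremum, so $f'(x_i) = 0$. These $m - 1 + k$ zeros of $f'$ are pairwise distinct (the touching zeros lie in no interval $(x_i, x_{i+1})$, and zeros picked in distinct open intervals are distinct), whence $Z(f') \ge m - 1 + k$. If $k \ge 1$ this already forces $Z(f') \ge m = Z(f)$, contradicting $Z(f) = Z(f') + 1$; so $k = 0$ and $N = m$. Now case~(i) gives $f(a) f(b) > 0$, hence $N = m$ is even, hence $Z(f') = m - 1$ is odd, contradicting the assumption that $Z(f')$ is even; and case~(ii) gives $f(a) f(b) < 0$, hence $N = m$ is odd, hence $Z(f') = m - 1$ is even, contradicting the assumption that $Z(f')$ is odd. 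In either case we reach a contradiction, so $Z(f) \le Z(f')$.

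The ingredients — the intermediate-value theorem for the sign pattern, vanishing of $f'$ at interior extrema, and Rolle's Theorem — are routine; the only delicate point is the parity bookkeeping once the argument has been reduced to the case $k = 0$ of no touching zeros, where one must combine the sign of $f(a) f(b)$ (equivalently, the parity of the number of crossing zeros, equivalently the parity of $m$) with the assumed parity of $Z(f')$ to produce the contradiction.
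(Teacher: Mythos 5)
Your argument is correct. The paper states Corollary~\ref{cor:Rolle+} without proof, so there is no in-paper argument to compare against; what you give is the natural refinement of the paper's proof of Corollary~\ref{cor:Rolle}: the $Z(f)-1$ Rolle zeros between consecutive zeros of $f$ are supplemented either by a Fermat zero of $f'$ at any non-crossing (touching) zero of $f$ --- which already forces $Z(f')\ge Z(f)$ --- or, when every zero is a crossing, by the parity mismatch between $Z(f)$ (whose parity is fixed by the sign of $f(a)f(b)$ via the sign-change count) and the assumed parity of $Z(f')=Z(f)-1$. The bookkeeping is handled correctly: the touching zeros are interior local extrema of $f$ because $f$ vanishes there and keeps a fixed nonzero sign on both adjacent subintervals, and the $m-1+k$ zeros of $f'$ you produce are pairwise distinct since the Rolle zeros lie in disjoint open intervals and the touching zeros are their endpoints.
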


\begin{cor} \label{cor:Rolle++}
Let $f \colon [a,b] \to \R$ be continuous on the closed interval $[a,b]$ and differentiable on the open interval $(a,b)$.
\begin{enumerate}[(i)]
\item
If $Z(f')$ is even, $f(a) \cdot f(b)<0$, and $f(a) \cdot f'(a)>0$ (or $f(a) \cdot f'(a+)=\infty$), then $Z(f) \le Z(f')-1$.
\item
If $Z(f')$ is odd, $f(a) \cdot f(b)>0$, and $f(a) \cdot f'(a)>0$ (or $f(a) \cdot f'(a+)=\infty$), then $Z(f) \le Z(f')-1$.
\end{enumerate}
\end{cor}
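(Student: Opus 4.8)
By Corollary~\ref{cor:Rolle} we already have $Z(f') \ge Z(f)-1$, so the task is to improve this by two: the plan is to produce one extra zero of $f'$ from a turning point of $f$ just to the right of $a$, and a second extra zero either from a ``touching'' zero of $f$ or from a parity count of the sign changes of $f$. Write $N = Z(f)$ and list the zeros of $f$ in $(a,b)$ as $z_1 < \cdots < z_N$; since $f(a)\cdot f(b) \neq 0$ under either hypothesis, all of these lie in the open interval, and in case~(i) the sign change $f(a)\cdot f(b) < 0$ forces $N \ge 1$ by the intermediate value theorem. Replacing $f$ by $-f$ if necessary --- which leaves all hypotheses and the conclusion unchanged --- I would reduce to $f(a) > 0$, so that $f'(a+)$ is positive or $+\infty$, $f(b) < 0$ in case~(i), and $f(b) > 0$ in case~(ii).

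First I would recover $Z(f') \ge N$ directly: Rolle's theorem on each $[z_i, z_{i+1}]$ gives zeros $w_1,\ldots,w_{N-1}$ of $f'$ with $w_i \in (z_i,z_{i+1})$, and (when $N \ge 1$) the endpoint hypothesis yields one more. Since $f(a) > 0$ and $f$ increases just to the right of $a$, while $f$ stays positive on $(a,z_1)$ (as $z_1$ is the first zero) and vanishes at $z_1$, the maximum of $f$ over $[a,z_1]$ exceeds $f(a) > 0 = f(z_1)$, hence is attained at an interior point $w_0 \in (a,z_1)$ with $f'(w_0)=0$. The $N$ points $w_0,w_1,\ldots,w_{N-1}$ are distinct, so $Z(f') \ge N$. (If $N = 0$, which is possible only in case~(ii), this step is vacuous and one uses only the parity step below.)

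For the final zero I would split into cases. If some $z_i$ is \emph{not} a sign change of $f$, it is an interior local extremum of $f$, so $f'(z_i) = 0$; as $z_i$ differs from every $w_j$, this gives $Z(f') \ge N+1$. Otherwise every $z_i$ is a sign change, so $f$ alternates sign across $z_1,\ldots,z_N$; since $f > 0$ on $(a,z_1)$, the sign of $f$ on $(z_N,b)$ equals $(-1)^N$, and matching this with the sign of $f(b)$ forces $N$ odd in case~(i) and $N$ even in case~(ii). In either part of the corollary, $N$ then has parity opposite to the one assumed for $Z(f')$, so $Z(f') \ge N$ together with $Z(f') \not\equiv N \pmod 2$ again yields $Z(f') \ge N+1$. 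In all cases $Z(f) = N \le Z(f') - 1$.

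The step needing the most care is the turning point near $a$: because $f'$ need not be continuous, one must argue through the extreme value theorem for $f$ on the compact interval $[a,z_1]$ and Fermat's interior extremum criterion, rather than through any sign analysis of $f'$ itself. The other delicate point is the treatment of ``touching'' (non-sign-change) zeros of $f$ --- exactly the configuration in which the parity argument breaks down --- but, as noted, each such zero is itself a zero of $f'$ distinct from the $w_j$, and so supplies the missing zero directly. Finally, it is worth checking that the hypotheses, while not vacuous, rule out $Z(f')=0$ (which would make the claimed bound absurd): an $f'$ with no zero on $(a,b)$ would have a fixed sign behaviour incompatible with the endpoint and parity assumptions.
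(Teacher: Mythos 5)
Your proof is correct. The paper states Corollary~\ref{cor:Rolle++} without any proof, so there is nothing to compare against; your argument supplies exactly the refinement of Corollary~\ref{cor:Rolle} that the statement calls for. The two key moves are both sound: (1) the endpoint condition $f(a)\cdot f'(a)>0$ (or the infinite one-sided limit) forces a point $x_0\in(a,z_1)$ with $|f(x_0)|>|f(a)|$, so the extremum of $f$ on $[a,z_1]$ is interior and Fermat yields a critical point $w_0$ strictly to the left of all the Rolle points $w_i\in(z_i,z_{i+1})$, giving $Z(f')\ge Z(f)$; (2) the last unit comes either from a non-sign-change zero of $f$ (itself a zero of $f'$ distinct from all $w_j$) or, when all zeros are crossings, from the parity mismatch between $Z(f)$ (determined by $\sign f(a)\cdot\sign f(b)$) and the assumed parity of $Z(f')$. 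Your treatment of the edge cases is also right: $Z(f)=0$ can only occur in case (ii), where the odd parity of $Z(f')$ still gives $Z(f')\ge 1$, and in case (i) the hypotheses force $Z(f')\ge 2$, so the bound is never vacuously negative. One could quibble that ``$f$ increases just to the right of $a$'' should be read as ``$f(a+h)>f(a)$ for all small $h>0$'' (which is what a positive one-sided derivative actually delivers), but that is all your extremum argument uses.
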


\end{document}